\documentclass[12pt]{amsproc}
\usepackage{ytableau,hyperref,amsaddr,amssymb}
\title{Tableau Correspondences and Representation Theory}
\author{Digjoy Paul and Amritanshu Prasad}\address{\href{http://www.imsc.res.in}{The Institute of Mathematical Sciences (HBNI) Chennai}}\email{\href{mailto:digjoypaul@gmail.com}{digjoypaul@gmail.com},\href{mailto:amri@imsc.res.in}{amri@imsc.res.in}}\email{\href{mailto:arghya.sadhukhan0@gmail.com}{arghya.sadhukhan0@gmail.com}}
\author{Arghya Sadhukhan}\address{\href{https://www-math.umd.edu}{University of Maryland}}
\subjclass[2010]{05E10,20C30,22E46}
\keywords{Burge correspondence, Gelfand model, Schur-Weyl duality, Specht modules, RSK correspondence, tableaux, Weyl modules}
\newcommand{\gl}[1]{\mathrm{GL}_{#1}(\CC)}
\newcommand{\glw}[1][W]{\mathrm{GL}(#1)}
\newcommand{\ch}{\mathrm{ch}}
\newcommand{\CC}{\mathbf C}
\newcommand{\ZZ}{\mathbf Z}
\newcommand{\tr}{\mathrm{tr}}
\newcommand{\Tab}{\mathrm{Tab}}
\newcommand{\wt}{\mathrm{wt}}
\newcommand{\M}{\mathbf{M}}
\newcommand{\N}{\mathbf{N}}
\newcommand{\RSK}{\mathrm{RSK}}
\newcommand{\BUR}{\mathrm{BUR}}
\newcommand{\Sym}{\mathrm{Sym}}
\newcommand{\TP}{\mathrm{TP}}
\newcommand{\Rep}{\mathrm{Rep}}
\newcommand{\Ind}{\mathrm{Ind}}
\newtheorem{theorem}{Theorem}
\theoremstyle{definition}
\newtheorem{definition}[theorem]{Definition}
\theoremstyle{remark}
\newtheorem{example}[theorem]{Example}
\newtheorem{remark}[theorem]{Remark}
\begin{document}
\begin{abstract}
  We deduce decompositions of natural representations of general linear groups and symmetric groups from combinatorial bijections involving tableaux.
  These include some of Howe's dualities, Gelfand models, the Schur-Weyl decomposition of tensor space, and multiplicity-free decompositions indexed by threshold partitions.
\end{abstract}
\maketitle
\section{Introduction}
\label{sec:introduction}
Bijections in algebraic combinatorics are often manifestations of results in representation theory.
For example, the Robinson-Schensted correspondence, between permutations and pairs of standard tableaux of the same shape, reflects the fact that the sum of squares of dimensions of irreducible representations of a symmetric group is its order.
Sometimes combinatorial identities can be used to \emph{prove} results in representation theory.
For instance, the classification of the irreducible representations of symmetric groups and Young's rule are deduced from the Robinson-Schensted-Knuth (RSK) correspondence in \cite{rtcv}.
The dual RSK correspondence is used to determine what happens when a representation is twisted by the sign character.
This article collects many more instances of this phenomenon.

Section~\ref{sec:semist-young-tabl} reviews basic facts about semistandard Young tableaux and Schur functions.
Section~\ref{sec:gln-chars} reviews fundamental results in the polynomial representation theory of general linear groups.
The RSK correspondence is a bijection from matrices with non-negative integer entries onto pairs of semistandard tableaux of the same shape.
The dual RSK correspondence is a bijection from matrices with entries $0$ or $1$ onto pairs of semistandard tableaux of mutually conjugate shape.
In Section~\ref{sec:rsk-corr-its}, the RSK correspondence and its dual are shown to imply the $(\mathrm{GL}_m, \mathrm{GL}_n)$-duality and the skew $(\mathrm{GL}_m, \mathrm{GL}_n)$-duality theorems of Howe.
The symmetry property of the RSK correspondence gives a Gelfand model for $\gl n$.
Sch\"utzenberger's lemma on the RSK correspondence is shown to give a refinement of the Gelfand model.

The Burge correspondence is another bijection from matrices with non-negative integer entries onto pairs of semistandard tableaux of the same shape, less well-known than the RSK correspondence.
In Section~\ref{sec:burg-corr}, we show that its symmetry property gives rise to another Gelfand model for $\gl n$, which also has a refinement based on an adaptation of Sch\"utzenberger's lemma to this setting.
Two more correspondences of Burge give rise to multiplicity-free decompositions into representations parameterized by threshold partitions, and conjugate threshold partitions. These are discussed in Section~\ref{sec:two-more}.

Section~\ref{sec:repr-symm-groups} describes a passage from representations of $\gl n$ to representations of $S_n$ via the all-ones weight space.
Applying this to one side of $(\mathrm{GL}_m, \mathrm{GL}_n)$-duality, we recover Schur-Weyl duality.
Applying it to the Gelfand models of $\gl n$, we recover combinatorial Gelfand models for $S_n$ due to Inglis, Richardson, and Saxl~\cite{Inglis1990}.
Applying it to the multiplicity-free representations of $\gl n$ in Section~\ref{sec:two-more}, we recover some other interesting multiplicity-free representations of $S_n$, which were outlined by Bump~\cite{bump} using completely different methods.

Most of the results in this article have appeared in the MSc thesis of Arghya Sadhukhan~\cite{arghya-thesis}.
\section{Semistandard Young Tableaux and Schur Polynomials}
\label{sec:semist-young-tabl}
In this section we recall basic facts about semistandard Young tableaux and Schur polynomials.
For detailed, self-contained expositions, see \cite{rtcv,schur}.
\begin{definition}
  A \emph{semistandard Young tableau} in $n$ letters is a left-justified array of boxes with rows of weakly decreasing length filled with numbers between $1$ and $n$ such that
  \begin{enumerate}
  \item the numbers increase weakly from left to right along rows,
  \item the numbers increase strictly from top to bottom along columns.
  \end{enumerate}
  If a semistandard Young tableau has $l$ rows, and $\lambda_i$ boxes in the $i$th row for $i=1,\dotsc,l$, then the partition $\lambda=(\lambda_1,\dotsc,\lambda_l)$ is called the shape of the tableau.
  The weight of a tableau is the integer vector $(\mu_1,\dotsc,\mu_n)$ where $\mu_i$ the number of times the number $i$ occurs.
  If $t$ is a tableau, we write $\wt(t)$ for its weight.
  Write $\Tab_n(\lambda)$ for the set of all semistandard Young tableaux in $n$ letters having shape $\lambda$.
  The subset of $\Tab_n(\lambda)$ consisting of tableaux with weight $\mu$ is denoted by $\Tab(\lambda,\mu)$.
\end{definition}
\begin{example}
  The tableau in $7$ letters
  \begin{displaymath}
    t=\ytableaushort{22355,4446,57}
  \end{displaymath}
  has shape $(5,4,2)$ and $\wt(t)=(0,2,1,3,3,1,1)$.
\end{example}
Given an integer vector $\mu=(\mu_1,\dotsc,\mu_n)$ with $n$-coordinates, let $x^\mu$ denote the monomial $x_1^{\mu_1}\dotsb x_n^{\mu_n}$ in $n$ variables.
\begin{definition}
  [Schur Polynomial]
  For each integer partition $\lambda$, the Schur polynomial in $n$ variables corresponding to $\lambda$ is defined as:
  \begin{displaymath}
    s_\lambda(x_1,\dotsc,x_n) = \sum_{t\in \Tab_n(\lambda)} x^{\wt(t)}.
  \end{displaymath}
\end{definition}
It turns out that $s_\lambda$ is a symmetric polynomial in $n$ variables.
If the partition $\lambda$ has more than $n$ non-zero parts, there are no tableaux in $n$ letters of shape $\lambda$ (because the first column of the tableau has to be strictly increasing, and has length equal the number of parts of $\lambda$).
On the other hand, if the number of non-zero parts of $\lambda$ is at most $n$, then there is at least one such tableau, namely the one whose $i$th row has all boxes filled with $i$.
Therefore $s_\lambda(x_1,\dotsc,x_n)=0$ if and only if $\lambda$ has more than $n$ non-zero parts.

Let $\Lambda^d_n$ denote the set of all integer partitions  of $d$ with at most $n$ parts.
By padding a partition with $0$'s, write it as an integer vector $\lambda=(\lambda_1,\dotsc,\lambda_n)$ with exactly $n$ parts.
Thus
\begin{displaymath}
  \Lambda^d_n = \{(\lambda_1,\dotsc,\lambda_n)\in \ZZ^n\mid \lambda_1+\dotsb+\lambda_n = d,\; \lambda_1\geq \dotsb \geq \lambda_n\geq 0\}.
\end{displaymath}
\begin{theorem}
  \cite[Theorem~5.4.3]{rtcv}
  The set
  \begin{displaymath}
    \{s_\lambda\mid \lambda\in \Lambda^d_n\}
  \end{displaymath}
  is a basis for the space of all symmetric polynomials of degree $d$ in $n$ variables with coefficients in $\CC$.
\end{theorem}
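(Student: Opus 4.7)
The plan is to express each Schur polynomial in the monomial symmetric polynomial basis and observe that the transition matrix is upper unitriangular with respect to dominance order on partitions.

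First I would recall (or briefly justify) that the monomial symmetric polynomials $\{m_\mu \mid \mu \in \Lambda^d_n\}$, defined by $m_\mu = \sum_\nu x^\nu$ summed over distinct rearrangements $\nu$ of $\mu$, form a basis for the space of symmetric polynomials of degree $d$ in $n$ variables. This is elementary: any symmetric polynomial is uniquely determined by its coefficients on monomials indexed by partitions, and conversely those coefficients may be prescribed arbitrarily. So it suffices to show that $\{s_\lambda \mid \lambda \in \Lambda^d_n\}$ and $\{m_\mu \mid \mu \in \Lambda^d_n\}$ are related by an invertible change of basis.

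Next, grouping terms in the defining sum of $s_\lambda$ according to weight gives
\begin{displaymath}
  s_\lambda = \sum_{\mu} K_{\lambda\mu}\, x^\mu, \qquad \text{where } K_{\lambda\mu} := |\Tab(\lambda,\mu)|.
\end{displaymath}
Since $s_\lambda$ is symmetric, $K_{\lambda\mu}$ depends only on the partition obtained by sorting $\mu$, and hence
\begin{displaymath}
  s_\lambda = \sum_{\mu \in \Lambda^d_n} K_{\lambda\mu}\, m_\mu.
\end{displaymath}

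The heart of the argument is then two observations about the Kostka numbers $K_{\lambda\mu}$. First, $K_{\lambda\lambda}=1$: the column-strict condition forces every entry $i$ to sit in one of the first $i$ rows, so the only tableau of shape $\lambda$ and weight $\lambda$ is the one whose $i$th row is filled entirely with $i$'s. Second, $K_{\lambda\mu}=0$ unless $\mu \leq \lambda$ in dominance order, i.e.\ unless $\mu_1 + \cdots + \mu_k \leq \lambda_1+\cdots+\lambda_k$ for every $k$; indeed, in any semistandard tableau of shape $\lambda$ the entries $\leq k$ are confined to the first $k$ rows, which contain only $\lambda_1+\cdots+\lambda_k$ boxes.

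Fixing any total order on $\Lambda^d_n$ that refines dominance order, these two facts show the matrix $(K_{\lambda\mu})_{\lambda,\mu\in\Lambda^d_n}$ is upper unitriangular, hence invertible over $\CC$. Therefore $\{s_\lambda \mid \lambda \in \Lambda^d_n\}$ is a basis of the space of symmetric polynomials of degree $d$ in $n$ variables. There is no real obstacle beyond carefully stating the Kostka recursion and dominance bound; the main conceptual point is the unitriangular transition to the monomial basis.
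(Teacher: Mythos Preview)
Your argument is correct and is the standard one: the Kostka transition matrix from the Schur polynomials to the monomial symmetric polynomials is unitriangular with respect to any linear extension of dominance order, so the $s_\lambda$ form a basis. The two key facts you isolate, $K_{\lambda\lambda}=1$ and $K_{\lambda\mu}=0$ unless $\mu\trianglelefteq\lambda$, are stated with the right justifications.

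As for comparison with the paper: the paper does not actually supply a proof of this theorem. It is quoted as background with a citation to \cite[Theorem~5.4.3]{rtcv}, and no argument is given in the text. So there is nothing in the paper to compare your approach against, beyond noting that the unitriangularity argument you wrote is precisely the one that appears in the cited source.
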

\section{Polynomial Representations of $\gl n$}
\label{sec:gln-chars}
In this section we recall basic facts about polynomial representations of $\gl n$.
Details can be found in \cite[Chapter~6]{rtcv}.
\begin{definition}
  A \emph{polynomial representation} of $\gl n$ is a finite dimensional vector space $W$ over $\CC$, together with a group homomorphism $\rho:\gl n\to \glw$, such that each entry of the matrix of $\rho(g)$ (with respect to any basis of $W$) is a polynomial in the entries of the matrix $g$.
  The polynomial representation $W$ is said to be \emph{homogeneous of degree $d$} if these polynomials are homogeneous of degree $d$.
  It is said to be \emph{irreducible} if there is no non-trivial proper subspace $W'\subset W$ that is left invariant by $\rho(g)$ for all $g\in \gl n$.
  Representations $(\rho_1,W_1)$ and $(\rho_2,W_2)$ are said to be \emph{isomorphic} if there exists a linear isomorphism $T:W_1\to W_2$ such that $\rho_2(g)\circ T = T\circ \rho_1(g)$ for each $g\in \gl n$.
\end{definition}
\begin{example}
  [The trivial representation]
  The only homogeneous polynomial representations of degree $0$ are where $\rho(g)$ is identically the identity operator of $W$.
\end{example}
\begin{example}
  [Direct sum]
  If $(\rho_1,W_1)$ and $(\rho_2,W_2)$ are polynomial representations of $\gl n$, then $(\rho_1\oplus \rho_2,W_1\oplus W_2)$ is also a polynomial representation of $\gl n$.
  If $\rho_1$ and $\rho_2$ are homogeneous of degree $d$, then so is $\rho_1\oplus\rho_2$.
\end{example}
\begin{example}
  [The defining representation]
  \label{example:defining-rep}
  The identity map $\rho:\gl n\to \gl n$ defines a representation of $\gl n$ on $\CC^n$.
  This representation is homogeneous of degree $1$ and is called the \emph{defining representation}.
\end{example}
\begin{example}
  [Tensor product]
  \label{example:tensor-prod}
  If $(\rho_1,W_1)$ and $(\rho_2,W_2)$ are polynomial representations of $\gl n$, then so is $(\rho_1\otimes \rho_2,W_1\otimes W_2)$.
  If $\rho_1$ has degree $d_1$ and $\rho_2$ has degree $d_2$, then $\rho_1\otimes \rho_2$ has degree $d_1+d_2$.
  As a particular example, $\otimes^d \CC^n$, the $d$-fold tensor power of the defining representation $\CC^n$, is homogeneous of degree $d$ and dimension $n^d$.
\end{example}
\begin{example}
  [Symmetric and alternating tensors]
  The symmetric group $S_d$ acts on $\otimes^d \CC^n$ by permuting the tensor factors.
  The subspace of invariant tensors is denoted by $\Sym^d \CC^n$.
  Since the actions of $\gl n$ and $S_d$ on $\otimes^d \CC^n$ commute, $\Sym^d\CC^n$ is invariant under the action of $\gl n$, forming a homogeneous representation of degree $d$, which is known as the $d$th symmetric tensor.
  Let $L$ denote the subalgebra of $\bigoplus_{d=0}^\infty \otimes^d \CC^n$ generated by $x\otimes x$.
  Let $L_d=L\cap \otimes^d\CC^n$.
  Then $L_d$ is an invariant subspace of $\otimes^d\CC^n$, and the quotient $\wedge^d \CC^n=\otimes^d \CC^n/L_d$ is a representation of $\gl n$, known as the $d$th alternating tensor.

  Let $\lambda=(\lambda_1,\dotsc,\lambda_l)$ be a partition of $d$.
  Then
  \begin{align*}
    \Sym^\lambda \CC^n & = \bigotimes_{i=1}^l \Sym^{\lambda_i} \CC^n\\
    \wedge^\lambda \CC^n & = \bigotimes_{i=1}^l \wedge^{\lambda_i}\CC^n
  \end{align*}
  are homogeneous polynomial representations of $\gl n$ of degree $d$.
\end{example}
Let $T_n\subset \gl n$ denote the subgroup of invertible diagonal matrices.
We use $\Delta(x_1,\dotsc,x_n)$ to denote the diagonal matrix with entries $x_1,\dotsc, x_n$.
\begin{definition}
  [Weight vector]
  \label{definition:weight-vector}
  Let $(\rho,W)$ be a polynomial representation of $\gl n$.
  A vector $v\in W$ is said to be a \emph{weight vector with weight $\mu=(\mu_1,\dotsc,\mu_n)$} if, for all $x_1,\dotsc,x_n\in \CC^*$,
  \begin{displaymath}
    \rho(\Delta(x_1,\dotsc,x_n))v = x_1^{\mu_1}\dotsb x_m^{\mu_n}v.
  \end{displaymath}
  The subspace of all weight vectors of weight $\mu$ is called the $\mu$-\emph{weight space} of $W$ and denoted $W(\mu)$.
\end{definition}
\begin{theorem}
  \label{theorem:weight-basis}
  \cite[Theorem~6.6.5]{rtcv}
  Every polynomial representation $(\rho,W)$ of $\gl n$ admits a basis of weight vectors.
\end{theorem}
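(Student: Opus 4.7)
The plan is to restrict $\rho$ to the $n$ coordinate one-parameter subgroups of $T_n$, use the polynomial hypothesis together with the homomorphism property to produce, for each such subgroup, a finite family of mutually orthogonal idempotents in $\mathrm{End}(W)$ summing to the identity, and then exploit the commutativity of $T_n$ to take joint eigenspaces.

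Fix $i \in \{1,\dotsc,n\}$ and set $\Delta_i(x) = \Delta(1,\dotsc,1,x,1,\dotsc,1)$ with $x$ in position $i$. Since $W$ is polynomial, choosing a basis we may write
\begin{displaymath}
  \rho(\Delta_i(x)) = \sum_{k\geq 0} A_{i,k}\, x^k,
\end{displaymath}
with $A_{i,k}\in \mathrm{End}(W)$ and only finitely many nonzero. The identity $\rho(\Delta_i(xy))=\rho(\Delta_i(x))\rho(\Delta_i(y))$ expands to
\begin{displaymath}
  \sum_{k} A_{i,k}(xy)^k = \sum_{k,l} A_{i,k}A_{i,l}\, x^k y^l,
\end{displaymath}
and equating coefficients of $x^k y^l$ gives $A_{i,k}^2=A_{i,k}$ and $A_{i,k}A_{i,l}=0$ for $k\neq l$. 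Setting $x=1$ produces $\sum_k A_{i,k}=I$. Thus $\{A_{i,k}\}_k$ is a complete orthogonal system of idempotents, and $W = \bigoplus_k A_{i,k}W$, with $A_{i,k}W$ equal to the set of $v$ satisfying $\rho(\Delta_i(x))v = x^k v$ for all $x\in \CC^*$.

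Since diagonal matrices commute, the operators $\rho(\Delta_i(x))$ and $\rho(\Delta_j(y))$ commute for all $i,j$ and all $x,y$, and hence the idempotents $A_{i,k}$ mutually commute. Taking the joint spectral decomposition,
\begin{displaymath}
  W = \bigoplus_{\mu=(\mu_1,\dotsc,\mu_n)} A_{1,\mu_1}\dotsb A_{n,\mu_n} W,
\end{displaymath}
and using $\Delta(x_1,\dotsc,x_n)=\Delta_1(x_1)\dotsb\Delta_n(x_n)$ together with the multiplicativity of $\rho$, one sees that a vector in the $\mu$-summand is acted on by $\rho(\Delta(x_1,\dotsc,x_n))$ as multiplication by $x_1^{\mu_1}\dotsb x_n^{\mu_n}$. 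Hence this summand is exactly the weight space $W(\mu)$, and concatenating bases of the finitely many nonzero $W(\mu)$ yields the required basis of weight vectors. The only substantive step is the passage from the polynomial homomorphism identity to the orthogonal-idempotent relations, which is a direct coefficient comparison in two variables; the rest is formal and uses only that $T_n$ is abelian.
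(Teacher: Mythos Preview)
The paper does not supply its own proof of this statement; it is quoted from \cite[Theorem~6.6.5]{rtcv} as background, and the text moves directly from the theorem to examples. So there is nothing in the paper to compare your argument against.

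That said, your proof is correct and self-contained. The key step---expanding $\rho(\Delta_i(xy))=\rho(\Delta_i(x))\rho(\Delta_i(y))$ and equating coefficients of $x^ky^l$ to obtain a complete orthogonal family of idempotents $\{A_{i,k}\}_k$---is sound. The only point worth making explicit is why the $A_{i,k}$ for different $i$ commute: from $\rho(\Delta_i(x))\rho(\Delta_j(y))=\rho(\Delta_j(y))\rho(\Delta_i(x))$ for all $x,y\in\CC^*$, comparing coefficients of $x^ky^l$ gives $A_{i,k}A_{j,l}=A_{j,l}A_{i,k}$. You do say this follows, but spelling it out would remove any doubt. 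After that the joint decomposition and the identification with the weight spaces $W(\mu)$ are routine. Your argument extracts the weight-space decomposition directly from the polynomial-homomorphism hypothesis, without invoking complete reducibility or any general structure theory of algebraic tori, which is an appealingly elementary route.
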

\begin{example}
  \label{example:weight-basis-defining}
  In the defining representation of $\gl n$ (Example~\ref{example:defining-rep}), the coordinate vector $e_i\in \CC^n$ is a weight vector with weight $e_i$.
  The set $\{e_i\mid i=1,\dotsc,n\}$ is a basis of weight vectors for $\CC^n$.
\end{example}
\begin{example}
  \label{example:weight-basis-tensor}
  In the tensor space $\otimes^d \CC^n$ (Example~\ref{example:tensor-prod}),
  \begin{displaymath}
    \{e_{i_1}\otimes \dotsb \otimes e_{i_d}\mid (i_1,\dotsc,i_d)\in \{1,\dotsc,n\}^d \}
  \end{displaymath}
  is a basis of $\otimes^d \CC^n$ consisting of weight vectors.
  The weight of $e_{i_1}\otimes \dotsb \otimes e_{i_d}$ is $(\mu_1,\dotsc,\mu_n)$, where $\mu_k$ is the number of $k$ for which $i_k=i$.
\end{example}
\begin{example}
  For non-negative integers $n$ and $d$, define
  \begin{displaymath}
    M(n,d) = \{(i_1,\dotsc,i_d)\mid 1\leq i_1\leq \dotsb \leq i_d\leq n\}.
  \end{displaymath}
  Given $I=(i_1,\dotsc,i_d)\in M(n,d)$ the symmetric tensor $e_{i_1}\dotsb e_{i_d}$ is a weight vector in $\Sym^d\CC^n$ with weight $(\mu_1,\dotsc,\mu_n)$ where $\mu_i$ is the number of $k$ for which $i_k=i$.
  As $I$ runs over $M(n,d)$, these vectors form a basis of $\Sym^d\CC^n$.

  For non-negative integers $n$ and $d$, define
  \begin{displaymath}
    N(n,d) = \{(i_1,\dotsc,i_d)\mid 1\leq i_1< \dotsb < i_d\leq n\}.
  \end{displaymath}
  Given $I=(i_1,\dotsc,i_d)\in N(n,d)$ the alternating tensor $e_{i_1}\wedge\dotsb \wedge e_{i_d}$ is a weight vector in $\wedge^d\CC^n$ with weight $(\mu_1,\dotsc,\mu_n)$ where $\mu_i$ is the number of $k$ for which $i_k=i$.
  As $I$ runs over $N(n,d)$, these vectors form a basis of $\wedge^d\CC^n$.
\end{example}
Most proofs in this article will be based on finding bases of weight vectors of representations as in the examples above.
\begin{definition}
  The \emph{character} of a polynomial representation $W$ of $\gl n$ is defined as
  \begin{displaymath}
    \ch_W(x_1,\dotsc,x_n) = \tr(\rho(\Delta(x_1,\dotsc,x_n)); W).
  \end{displaymath}
  For any permutation $w\in S_n$, $\Delta(x_{w(1)},\dotsc,x_{w(n)})$ is conjugate to\linebreak $\Delta(x_1,\dotsc,x_n)$.
  Therefore $\ch_W$ is a symmetric polynomial.
  If $W$ is homogeneous of degree $d$, then $\ch_W$ is homogeneous of degree $d$.
\end{definition}
\begin{remark}
  Theorem~\ref{theorem:weight-basis} implies that
  \begin{displaymath}
    \ch_W(x_1,\dotsc,x_n) = \sum_{\mu} \dim W(\mu) x^\mu,
  \end{displaymath}
  the sum being over all $n$-tuples $\mu$ of non-negative integers.
\end{remark}
Let $\lambda'$ denote the partition \emph{conjugate} to $\lambda$, namely,
\begin{displaymath}
  \lambda'_i = \#\{j\mid \lambda_j\geq i\} \text{ for } i=1,\dotsc,n.
\end{displaymath}
\begin{theorem}
  \cite[Chapter~6]{rtcv}
  Let $\rho:\gl n\to \glw$ be a polynomial representation of $\gl n$.
  Then we have:
  \begin{enumerate}
  \item For every partition $\lambda\in \Lambda^d_n$, there exists an irreducible homogeneous polynomial representation $W^n_\lambda$ of $\gl n$ of degree $d$ which occurs in both $\Sym^\lambda \CC^n$ and $\wedge^{\lambda'} \CC^n$.
    This representation satisfies:
    \begin{displaymath}
      \ch_{W^n_\lambda}(x_1,\dotsc,x_n) = s_\lambda(x_1,\dotsc,x_n),
    \end{displaymath}
    the Schur polynomial corresponding to $\lambda$, in $n$ variables.
  \item Every polynomial representation $W$ of $\gl n$ has a unique decomposition of the form:
    \begin{displaymath}
      W = \bigoplus_\lambda (W^n_\lambda)^{\oplus m_\lambda}
    \end{displaymath}
    into irreducible polynomial representations (since $W$ is finite dimensional, it should be understood that $m_\lambda$ is positive for only finitely many $\lambda$).
  \item Two polynomial representations of $\gl n$ are isomorphic if and only if their characters are equal.
  \end{enumerate}
\end{theorem}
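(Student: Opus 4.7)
The plan is to combine Weyl's unitary trick (for complete reducibility), highest weight theory (to classify irreducibles), and the Schur basis of symmetric polynomials from Section~\ref{sec:semist-young-tabl} (to identify characters).

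First I would prove complete reducibility. The unitary group $U(n)$ is compact and Zariski-dense in $\gl n$. Averaging any positive definite Hermitian form on $W$ over $U(n)$ with respect to Haar measure gives a $U(n)$-invariant inner product; because $\rho$ is polynomial and $U(n)$ is Zariski-dense, a subspace is $\gl n$-stable iff it is $U(n)$-stable, so orthogonal complements remain subrepresentations. Hence $W$ decomposes as a finite direct sum of irreducibles.

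Next I would classify the irreducibles by highest weight. Fix the Borel subgroup $B_n$ of upper triangular matrices and its unipotent radical $N_n$ of strictly upper triangular unipotent matrices. By Theorem~\ref{theorem:weight-basis}, every irreducible $(\rho,W)$ has a weight basis; since $N_n$ is unipotent, it fixes some nonzero weight vector $v$, of weight $\lambda$ say. Standard $\mathfrak{sl}_2$ arguments applied to each simple root subgroup force $\lambda_1\ge\cdots\ge\lambda_n$, polynomiality forces $\lambda_n\ge 0$, and homogeneity of degree $d$ forces $|\lambda|=d$; so $\lambda\in\Lambda^d_n$. Since $\gl n\cdot v$ spans $W$, such a highest weight vector is unique up to scalar, giving a well defined invariant $\lambda\in\Lambda^d_n$ attached to each irreducible, which I denote $W^n_\lambda$; two irreducibles are isomorphic iff they share this highest weight.

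To pin down $\ch_{W^n_\lambda}=s_\lambda$ and realize $W^n_\lambda$ inside the two claimed spaces, I would use a Young symmetrizer construction. For a standard Young tableau $T$ of shape $\lambda$, the symmetrizer $c_T=a_Tb_T$ in the group algebra of $S_d$ acts on $\otimes^d\CC^n$ and commutes with $\gl n$; its row-symmetrization $a_T$ factors through $\Sym^\lambda\CC^n$ while its column-antisymmetrization $b_T$ factors through $\wedge^{\lambda'}\CC^n$, so the cyclic $\gl n$-module generated by $c_T(e_1^{\otimes\lambda_1}\otimes\cdots\otimes e_l^{\otimes\lambda_l})$ embeds into both. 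This generating vector has weight $\lambda$ and is fixed by $N_n$, so the module is a copy of $W^n_\lambda$. A combinatorial argument producing a weight basis indexed by $t\in\Tab_n(\lambda)$ with weight $\wt(t)$ then yields
\[
\ch_{W^n_\lambda}(x_1,\dotsc,x_n)=\sum_{t\in\Tab_n(\lambda)}x^{\wt(t)}=s_\lambda(x_1,\dotsc,x_n).
\]

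Parts (2) and (3) are then formal. Complete reducibility gives $W=\bigoplus_\lambda (W^n_\lambda)^{\oplus m_\lambda}$, and Schur's lemma together with the pairwise nonisomorphism of the $W^n_\lambda$ fixes each $m_\lambda$. The character $\ch_W=\sum_\lambda m_\lambda s_\lambda$ then determines the multiplicities by the linear independence of the Schur polynomials (Section~\ref{sec:semist-young-tabl}), proving (3). The main obstacle is the identification $\ch_{W^n_\lambda}=s_\lambda$: either one invokes Weyl's character formula and the bialternant identity for $s_\lambda$, or one builds the tableau-indexed weight basis directly, both of which require genuine input beyond the formal machinery established above.
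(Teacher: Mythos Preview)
The paper does not prove this theorem at all: it is stated with the citation \cite[Chapter~6]{rtcv} and used as background, with no proof environment following it. So there is nothing in the paper to compare your argument against.

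That said, your outline is a standard and essentially correct route to this classical result. A few small remarks if you want to tighten it. First, in the highest weight step you say ``since $N_n$ is unipotent, it fixes some nonzero weight vector''; more precisely, $N_n$ (or better, the Lie algebra $\mathfrak n$) stabilizes a nonzero vector by Engel/Lie, and one must then check that this $N_n$-fixed vector can be taken to be a simultaneous eigenvector for $T_n$, which follows because $T_n$ normalizes $N_n$ and preserves the weight decomposition. Second, the assertion that the image of the Young symmetrizer embeds into both $\Sym^\lambda\CC^n$ and $\wedge^{\lambda'}\CC^n$ is correct in spirit but needs care with the order of $a_T$ and $b_T$; with $c_T=a_Tb_T$ the image is naturally a quotient of $\Sym^\lambda\CC^n$ and a subspace of something built from $\wedge^{\lambda'}\CC^n$ (or vice versa with $b_Ta_T$), so one typically uses both $a_Tb_T$ and $b_Ta_T$ and the fact that they generate isomorphic modules. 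Finally, as you yourself note, the identification $\ch_{W^n_\lambda}=s_\lambda$ is the substantive step, and either the Weyl character formula plus the bialternant identity or an explicit semistandard basis (straightening) is required; the reference \cite{rtcv} does the latter.
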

\section{The RSK Correspondence and its Dual}
\label{sec:rsk-corr-its}
Let $\mu=(\mu_1,\dotsc,\mu_m)$ and $\nu=(\nu_1,\dotsc,\nu_n)$ be non-negative integer vectors with common sum $d$.
Let $\M_{\mu\nu}$ denote the set of all $m\times n$ matrices $A = (a_{ij})$ with non-negative integer entries such that
\begin{displaymath}
  \sum_j a_{ij} = \mu_i \text{ for }i=1,\dotsc,m \text{ and } \sum_i a_{ij} = \nu_j \text{ for } j=1,\dotsc,n.
\end{displaymath}
The RSK correspondence \cite[Section~3]{MR0272654} is an algorithmic bijection:
\begin{displaymath}
  \RSK : \M_{\mu\nu} \tilde\to \coprod_{\lambda\vdash d} \Tab(\lambda,\nu)\times \Tab(\lambda,\mu).
\end{displaymath}
\begin{theorem}
  \label{theorem:rsk}
  Let $W = \Sym^d(\CC^m\otimes \CC^n)$.
  Viewing $\CC^m$ and $\CC^n$ as the defining representations of $\gl m$ and $\gl n$ respectively, the functorial nature of $\Sym^d$ implies that $W$ is a representation of $\gl m\times \gl n$.
  The decomposition of $W$ into irreducible representations of $\gl m\times \gl n$ is given by:
  \begin{displaymath}
    W = \bigoplus_{\lambda\vdash d} W^n_\lambda\otimes W^m_\lambda.
  \end{displaymath}
\end{theorem}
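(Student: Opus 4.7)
The plan is to prove the character identity
\[
\ch_W(x_1,\dotsc,x_m;y_1,\dotsc,y_n) \;=\; \sum_{\lambda\vdash d} s_\lambda(x_1,\dotsc,x_m)\,s_\lambda(y_1,\dotsc,y_n),
\]
combinatorially from the RSK correspondence, and then invoke the fact (part (3) of the theorem in Section~\ref{sec:gln-chars}) that a polynomial representation of $\gl m\times \gl n$ is determined up to isomorphism by its character. Since the right-hand side is visibly the character of $\bigoplus_{\lambda\vdash d}W^m_\lambda\otimes W^n_\lambda$ viewed as a $\gl m\times \gl n$-module, the decomposition in the theorem follows.

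To compute the left-hand side I would first exhibit an explicit basis of weight vectors, following the pattern of the examples in Section~\ref{sec:gln-chars}. Let $e_1,\dotsc,e_m$ and $f_1,\dotsc,f_n$ be the standard weight bases of $\CC^m$ and $\CC^n$. Then $\{e_i\otimes f_j\}$ is a weight basis of $\CC^m\otimes \CC^n$ for the diagonal torus $T_m\times T_n$, with $e_i\otimes f_j$ of weight $(e_i,e_j)$. Passing to $\Sym^d(\CC^m\otimes \CC^n)$, a basis is given by the symmetric monomials
\[
\prod_{i,j}(e_i\otimes f_j)^{a_{ij}}
\]
indexed by $m\times n$ matrices $A=(a_{ij})$ of non-negative integers with $\sum_{i,j}a_{ij}=d$. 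The weight of this basis vector under $T_m\times T_n$ is $(\mu,\nu)$ where $\mu_i=\sum_j a_{ij}$ is the $i$th row sum and $\nu_j=\sum_i a_{ij}$ is the $j$th column sum, i.e.\ $A\in \M_{\mu\nu}$.

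Summing $x^\mu y^\nu$ over this weight basis and then grouping matrices by their row- and column-sum vectors gives
\[
\ch_W(x;y)\;=\;\sum_{\mu,\nu}|\M_{\mu\nu}|\,x^\mu y^\nu.
\]
Now I apply the RSK correspondence: each $A\in \M_{\mu\nu}$ is matched bijectively with a pair $(P,Q)$, where $P\in \Tab(\lambda,\nu)$ and $Q\in \Tab(\lambda,\mu)$ for some $\lambda\vdash d$. Substituting and separating the sums over $P$ and $Q$ yields
\[
\ch_W(x;y)\;=\;\sum_{\lambda\vdash d}\Bigl(\sum_\mu |\Tab(\lambda,\mu)|x^\mu\Bigr)\Bigl(\sum_\nu |\Tab(\lambda,\nu)|y^\nu\Bigr)\;=\;\sum_{\lambda\vdash d}s_\lambda(x)\,s_\lambda(y),
\]
which is the desired identity.

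The main step is simply verifying that the monomial basis of $\Sym^d(\CC^m\otimes \CC^n)$ really is a $T_m\times T_n$-weight basis with the claimed weights; all the combinatorial content is packaged into RSK, and the representation-theoretic conclusion follows from uniqueness of characters. There is no serious obstacle, only the pedagogical care of making explicit the identification of matrices in $\M_{\mu\nu}$ with weight-$(\mu,\nu)$ basis vectors of $W$.
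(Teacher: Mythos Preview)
Your proposal is correct and follows essentially the same route as the paper: you exhibit the monomial basis $\prod_{i,j}(e_i\otimes f_j)^{a_{ij}}$ of $\Sym^d(\CC^m\otimes\CC^n)$ indexed by non-negative integer matrices, read off its $(T_m\times T_n)$-weights as row and column sums, and then apply the RSK bijection to obtain the Cauchy identity $\ch_W=\sum_{\lambda\vdash d}s_\lambda(x)s_\lambda(y)$, concluding by the fact that characters determine polynomial representations. The paper's proof is organized identically, differing only in minor notational choices.
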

\begin{proof}
  Let $e_1\dotsc, e_n$ and $f_1,\dotsc,f_m$ denote the coordinate vectors in $\CC^n$ and $\CC^m$ respectively.
  Let $\mu$ and $\nu$ be non-negative integer vectors with common sum $d$.
  Given $A\in \M_{\mu\nu}$, define a vector $v_A\in W$ by
  \begin{displaymath}
    v_A = \prod_{i=1}^n\prod_{j=1}^m (e_i\otimes f_j)^{a_{ij}}.
  \end{displaymath}
  Then $v_A$ is an eigenvector for the action of $\Delta(x_1,\dotsc,x_n)\in T_n$, and $\Delta(y_1,\dotsc,y_m)\in T_m$ with eigenvalue $x^\nu y^\mu$.
  Therefore
  \begin{align*}
    \tr(\Delta(x)\Delta(y); W) & = \sum_\nu \sum_\mu \sum_{A\in \M_{\mu\nu}}x^\nu y^\mu\\
                               & = \sum_\nu\sum_\mu\sum_\lambda \sum_{t'\in \Tab(\lambda,\nu)}\sum_{t''\in \Tab(\lambda,\mu)} x^{\wt(t')}y^{\wt(t'')}\\
                               & = \sum_\lambda s_\lambda(x)s_\lambda(y)\\
                               & = \sum_{\lambda} \ch_{W^n_\lambda}(x)\ch_{W^m_\lambda}(y)\\
                               & = \sum_\lambda \ch_{W^n_\lambda\otimes W^m_\lambda}(x, y).
  \end{align*}
\end{proof}
\begin{remark}
  Theorem~\ref{theorem:rsk} is called \emph{$(\mathrm{GL}_n, \mathrm{GL}_m)$-duality} by Howe~\cite[Theorem~2.1.2]{MR1321638}, who gives a different proof. See also Benson and Ratcliff~\cite[Theorem~4.1.1]{doi:10.1142/9789812562500_0006}.
\end{remark}
Let $\mu=(\mu_1,\dotsc,\mu_m)$ and $\nu=(\nu_1,\dotsc,\nu_n)$ be non-negative integer vectors with common sum $d$.
Let $\N_{\mu\nu}$ denote the set of all $m\times n$ matrices $A = (a_{ij})$ with entries in $\{0,1\}$ such that
\begin{displaymath}
  \sum_j a_{ij} = \mu_i \text{ for }i=1,\dotsc,m \text{ and } \sum_i a_{ij} = \nu_j \text{ for } j=1,\dotsc,n.
\end{displaymath}
The dual RSK correspondence \cite[Section~5]{MR0272654} is an algorithmic bijection:
\begin{displaymath}
  \RSK^*: \N_{\mu\nu}\tilde\to \coprod_{\lambda\vdash d} \Tab(\lambda',\nu)\times \Tab(\lambda,\mu).
\end{displaymath}
\begin{theorem}
  \label{theorem:dual-rsk}
  Let $W=\wedge^d(\CC^m\otimes \CC^n)$.
  The decomposition of $W$ into irreducible representations of $\gl m\times \gl n$ is given by:
  \begin{displaymath}
    W = \bigoplus_\lambda W_{\lambda'}^n\otimes W_\lambda^m.
  \end{displaymath}
\end{theorem}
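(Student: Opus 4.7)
The plan is to mirror the proof of Theorem~\ref{theorem:rsk} line by line, with the symmetric basis replaced by an alternating one and the RSK correspondence replaced by its dual. Let $e_1,\dotsc,e_n$ and $f_1,\dotsc,f_m$ denote the coordinate vectors of $\CC^n$ and $\CC^m$, and fix any total order on the set of pairs $\{(i,j)\mid 1\le i\le n,\;1\le j\le m\}$. With respect to this order, the wedges
\[
v_A \;=\; \bigwedge_{(i,j)\,:\,a_{ij}=1} (e_i\otimes f_j),
\]
taken over $0/1$-matrices $A=(a_{ij})$ with exactly $d$ ones, form a basis of $W=\wedge^d(\CC^m\otimes\CC^n)$. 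Each $v_A$ is a bi-weight vector: if $A\in \N_{\mu\nu}$, then $\Delta(x_1,\dotsc,x_n)\in T_n$ scales it by $x^\nu$ and $\Delta(y_1,\dotsc,y_m)\in T_m$ scales it by $y^\mu$, since the $T_n$- and $T_m$-actions are diagonal in this basis and ignore the wedge signs.

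Summing over all $(\mu,\nu)$ and applying the dual RSK bijection to the innermost sum, I would compute
\begin{align*}
  \tr(\Delta(x)\Delta(y);W)
  &= \sum_\mu\sum_\nu\sum_{A\in\N_{\mu\nu}} x^\nu y^\mu\\
  &= \sum_\mu\sum_\nu\sum_\lambda\sum_{t'\in \Tab(\lambda',\nu)}\sum_{t''\in \Tab(\lambda,\mu)} x^{\wt(t')}y^{\wt(t'')}\\
  &= \sum_\lambda s_{\lambda'}(x)\,s_\lambda(y)\\
  &= \sum_\lambda \ch_{W^n_{\lambda'}\otimes W^m_\lambda}(x,y).
\end{align*}
Since characters determine polynomial representations of $\gl m\times \gl n$ up to isomorphism (applying part (3) of the structure theorem from Section~\ref{sec:gln-chars} to the product group, using that $W^n_{\lambda'}\otimes W^m_\lambda$ is an irreducible $\gl m\times \gl n$-representation with the displayed character), the decomposition follows.

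The main obstacle is essentially bookkeeping rather than conceptual: one must be careful that the index convention of $\N_{\mu\nu}$ (row sums $\mu$, column sums $\nu$) lines up with the conventions of the dual RSK bijection (which sends such a matrix to a pair in $\Tab(\lambda',\nu)\times \Tab(\lambda,\mu)$), so that the \emph{conjugate} partition appears on the $\gl n$-side and not the $\gl m$-side. A secondary point is that $v_A$ is only defined up to sign once the order on pairs is fixed, but since we only compare weight-space dimensions, the signs are irrelevant. Finally, one should note that terms with $\lambda_1 > n$ contribute nothing: in that case $\lambda'$ has more than $n$ parts, so $s_{\lambda'}(x_1,\dotsc,x_n)=0$ and the corresponding $W^n_{\lambda'}$ is the zero representation, consistent with the vanishing of $\Tab(\lambda',\nu)$ on the combinatorial side.
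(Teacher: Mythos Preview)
Your proof is correct and follows precisely the approach of the paper: define the alternating basis $v_A$ indexed by $0/1$-matrices, identify each $v_A$ as a bi-weight vector with eigenvalue $x^\nu y^\mu$, and then replace the RSK bijection by its dual in the character computation of Theorem~\ref{theorem:rsk}. The paper's proof is simply a terse ``proceed as in Theorem~\ref{theorem:rsk}'', and your write-up is exactly the natural expansion of that instruction, with the added (and welcome) remarks on sign conventions and the vanishing when $\lambda_1>n$.
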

\begin{proof}
  Given $A$ in $\N_{\mu\nu}$ define $v_A\in W$ by:
  \begin{displaymath}
    v_A = \bigwedge_{i=1}^n\bigwedge_{j=1}^m (e_i\otimes f_j)^{a_{ij}}.
  \end{displaymath}
  Then $v_A$ is an eigenvector for the action of $\Delta(x_1,\dotsc,x_n)\in T_n$ and $\Delta(y_1,\dotsc,y_m)\in T_m$ with eigenvalue $x^\nu y^\mu$.
  Now, proceeding as in the proof of Theorem~\ref{theorem:rsk} gives the result.
\end{proof}
\begin{remark}
  Theorem~\ref{theorem:dual-rsk} is called skew \emph{$(\mathrm{GL}_n, \mathrm{GL}_m)$-duality} by Howe \cite[Theorem~4.1.1]{MR1321638}, who gives a different proof.
\end{remark}
Knuth proved a symmetry theorem for the RSK correspondence:
\begin{displaymath}
  \RSK(A)=(P,Q) \text{ if and only if } \RSK(A') = (Q,P).
\end{displaymath}
Here $A'$ denotes the transpose of the matrix $A$.
Therefore, if $A$ is symmetric, then $\RSK(A)$ is of the form $(P,P)$ for some semistandard Young tableau $P$.
Let $\M^{\mathrm{sym}}_{\nu\nu}$ denote the set of symmetric $n\times n$ matrices in $\M_{\nu\nu}$.
The symmetry property of the RSK correspondence implies that it induces a bijection:
\begin{equation}
  \label{eq:rsk-sym}
  \RSK: \M^{\mathrm{sym}}_{\nu\nu} \tilde\to \coprod_{\lambda\vdash d} \Tab(\lambda,\nu).
\end{equation}
\begin{theorem}
  \label{theorem:rsk-symmetry}
  Let $W=\bigoplus_{k+2l=d}\Sym^k(\CC^n)\otimes \Sym^l(\wedge^2 \CC^n)$.
  This representation of $\gl n$ has decomposition into irreducibles given by:
  \begin{displaymath}
    W = \bigoplus_{\lambda\vdash d} W_\lambda^n.
  \end{displaymath}
\end{theorem}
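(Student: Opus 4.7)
The plan is to mimic the proof of Theorem~\ref{theorem:rsk}, but this time parametrising a weight basis of $W$ by symmetric non-negative integer matrices rather than arbitrary ones, and invoking the symmetrised RSK bijection \eqref{eq:rsk-sym} instead of the full RSK correspondence.

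First I would write down an explicit weight basis of $W$ indexed by $\coprod_\nu \M^{\mathrm{sym}}_{\nu\nu}$. Given a symmetric matrix $A=(a_{ij})$ with non-negative integer entries, set $k=\sum_i a_{ii}$ and $l=\sum_{i<j} a_{ij}$, so that $k+2l=d$ where $d=\sum_{ij}a_{ij}$ when $A\in \M^{\mathrm{sym}}_{\nu\nu}$. Define
\begin{displaymath}
  v_A \;=\; \Bigl(\prod_{i=1}^n e_i^{a_{ii}}\Bigr)\;\otimes\; \prod_{i<j} (e_i\wedge e_j)^{a_{ij}},
\end{displaymath}
where the first factor is a symmetric monomial in $\Sym^k(\CC^n)$ and the second is a symmetric monomial in the basis $\{e_i\wedge e_j\mid i<j\}$ of $\wedge^2\CC^n$ living in $\Sym^l(\wedge^2\CC^n)$. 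Because the standard monomials in $\{e_i\}$ give a basis of $\Sym^k(\CC^n)$ and the standard monomials in $\{e_i\wedge e_j:i<j\}$ give a basis of $\Sym^l(\wedge^2\CC^n)$, the vectors $v_A$, as $A$ ranges over all symmetric non-negative integer matrices with total entry sum $d$, form a basis of $W$.

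Next I would compute the weight of $v_A$. Each $e_i^{a_{ii}}$ contributes $a_{ii}$ to the $i$-th coordinate, and each factor $e_i\wedge e_j$ (with $i<j$) contributes $1$ to both the $i$-th and $j$-th coordinates under $\Delta(x_1,\dots,x_n)$. Hence $v_A$ is a weight vector with weight $\nu=(\nu_1,\dots,\nu_n)$ where $\nu_i=a_{ii}+\sum_{j\ne i}a_{ij}=\sum_j a_{ij}$ (using symmetry). Thus $A\in \M^{\mathrm{sym}}_{\nu\nu}$ precisely records the weight data of $v_A$.

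Finally I would compute the character and invoke the bijection \eqref{eq:rsk-sym}:
\begin{align*}
  \ch_W(x_1,\dots,x_n) &= \sum_\nu \#\M^{\mathrm{sym}}_{\nu\nu}\, x^\nu
  \;=\; \sum_\nu \sum_{\lambda\vdash d} |\Tab(\lambda,\nu)|\, x^\nu \\
  &= \sum_{\lambda\vdash d} s_\lambda(x_1,\dots,x_n)
  \;=\; \sum_{\lambda\vdash d} \ch_{W^n_\lambda}(x_1,\dots,x_n).
\end{align*}
By part (3) of the classification theorem in Section~\ref{sec:gln-chars}, equality of characters gives the claimed isomorphism. The only genuinely delicate step is verifying that the $v_A$ form a basis (and not merely a spanning set) of $W$; this reduces to the standard fact that ordered monomials in a chosen basis form a basis of a symmetric power, applied twice, tensored together.
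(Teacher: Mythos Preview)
Your proof is correct and follows essentially the same approach as the paper: you construct the same weight basis $v_A$ indexed by symmetric non-negative integer matrices, identify the weight as the row-sum vector, and then compute the character via the symmetrised RSK bijection~\eqref{eq:rsk-sym}. Your version spells out a few more details (why the $v_A$ form a basis, the explicit weight computation) but is otherwise identical to the paper's argument.
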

\begin{proof}
  As $A$ runs over symmetric $n\times n$ matrices with non-negative integer entries summing to $d$, the vectors
  \begin{displaymath}
    v_A = \prod_{i=1}^n e_i^{a_{ii}} \prod_{i<j} (e_i\wedge e_j)^{a_{ij}}
  \end{displaymath}
  form a basis of $W$ consisting of eigenvectors for the action of\linebreak $\Delta(x_1,\dotsc,x_n)$.
  When $A\in M^{\mathrm{sym}}_{\nu\nu}$, the eigenvalue corresponding to $v_A$ is $x^\nu$.
  Now using the correspondence (\ref{eq:rsk-sym}) on symmetric matrices:
  \begin{align*}
    \ch_W(x_1,\dotsc,x_n) &= \sum_{\nu}|\M_{\nu\nu}^{\mathrm{sym}}| x^\nu\\
                          & = \sum_\nu \sum_\lambda |\Tab(\lambda,\nu)| x^\nu\\
                          & =\sum_\lambda s_\lambda(x)\\
                          & = \sum_\lambda \ch_{W_\lambda}(x),
  \end{align*}
  thereby proving the result.
\end{proof}
\begin{remark}[A Gelfand Model]\label{remark:gelfand-model}
  We may say that the symmetric algebra:
  \begin{displaymath}
    \Sym(\CC^n\oplus \wedge^2\CC^n) :=\bigoplus_{k,l\geq 0}\Sym^k(\CC^n)\otimes \Sym^l(\wedge^2 \CC^n)
  \end{displaymath}
  is a \emph{model} for polynomial representation theory of $\gl n$, in the sense of Bernstein, Gelfand and Gelfand \cite{MR0453927}:
  it is a completely reducible representation containing each irreducible polynomial representation of $\gl n$ exactly once.
  For an alternate approach, see \cite[Theorem~4.5.1]{doi:10.1142/9789812562500_0006}.
\end{remark}
The following result was proved for permutation matrices by\linebreak Sch\"utzenberger~\cite[Theorem~4.4]{MR0498826}.
An elegant proof using the light-and-shadows version of the Robinson-Schensted correspondence was suggested (as an exercise) by Viennot \cite[Proposition~4.1]{MR0470059}.
This proof can be adapted to integer matrices using Fulton's matrix ball construction \cite[Section~4.2]{MR1464693}, or the generalization of Viennot's light-and-shadows algorithm in Prasad~\cite[Chapter~3]{rtcv}.
\begin{theorem}
  [Sch\"utzenberger's lemma]
  \label{theorem:schuetzenberger-lemma}
  The RSK correspondence takes symmetric matrices with trace $k$ to semistandard Young tableaux with $k$ odd columns.
\end{theorem}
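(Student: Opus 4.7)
The plan is to use Fomin's growth-diagram realisation of the RSK correspondence, in which Knuth's symmetry is built into the structure and the statement of the lemma reduces to a local analysis along the diagonal.

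Recall that Fomin assigns to each lattice point $(i,j)$ with $0\le i,j\le n$ a partition $\lambda^{(i,j)}$, determined by $\lambda^{(i,0)}=\lambda^{(0,j)}=\emptyset$ together with a local growth rule on each unit cell depending on the matrix entry $a_{ij}$; then $\lambda^{(n,n)}$ is the shape of $\RSK(A)$. Because the local rule is symmetric in the two coordinate directions---which is precisely Knuth's symmetry theorem---a symmetric matrix $A$ yields a diagram with $\lambda^{(i,j)}=\lambda^{(j,i)}$, so that the diagonal partitions form a saturated chain
\begin{displaymath}
\emptyset=\lambda^{(0,0)}\subseteq\lambda^{(1,1)}\subseteq\dotsb\subseteq\lambda^{(n,n)}.
\end{displaymath}

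The lemma will follow from the local claim that at each diagonal step, the number of odd columns of $\lambda^{(i,i)}$ exceeds the number of odd columns of $\lambda^{(i-1,i-1)}$ by exactly $a_{ii}$; telescoping in $i=1,\dotsc,n$ then identifies the number of odd columns of $\lambda^{(n,n)}$ with $\tr(A)$, which is the assertion of the lemma.

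I would prove the local claim by a case analysis of Fomin's rule specialised to the symmetric diagonal cell. Writing $\mu=\lambda^{(i-1,i-1)}$, $\nu=\lambda^{(i-1,i)}=\lambda^{(i,i-1)}$, and $\lambda=\lambda^{(i,i)}$, the rule makes $\nu/\mu$ a horizontal strip of some size $s$ and forces $|\lambda|-|\mu|=a_{ii}+2s$. The goal is to verify that the net change in odd-column count from $\mu$ to $\lambda$ is exactly $a_{ii}$ regardless of $s$: the $2s$ ``reflected'' boxes should pair up along consecutive columns of opposite $\mu$-parity and cancel in the count, while the remaining $a_{ii}$ boxes each toggle a column whose $\mu$-length is even into odd. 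I expect the main obstacle to be carrying out this parity bookkeeping cleanly---the precise columns into which $\lambda/\nu$ slots depend on the corners of $\mu$---but the symmetry $\lambda^{(i-1,i)}=\lambda^{(i,i-1)}$ removes a coordinate of freedom at each cell and keeps the case analysis elementary.
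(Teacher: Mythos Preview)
The paper does not supply its own proof of Sch\"utzenberger's lemma; it only points to the light-and-shadows argument of Viennot and its extension to integer matrices via Fulton's matrix-ball construction (or the version in \cite{rtcv}).  Your growth-diagram strategy is precisely the algebraic counterpart of those geometric proofs, so you are in the same circle of ideas the paper invokes rather than proposing something genuinely different.

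Your outline is correct, but the local step is much cleaner than the case analysis you anticipate.  Two remarks sharpen it.  First, a small slip: the diagonal chain $\emptyset=\lambda^{(0,0)}\subseteq\dotsb\subseteq\lambda^{(n,n)}$ is \emph{not} saturated in Young's lattice (each step adds $a_{ii}+2s$ boxes), so drop that word.  Second, and more usefully, the ``parity bookkeeping'' collapses to a one-line computation.  For any partition $\pi$, the number of odd columns is the alternating sum $\sum_{i\ge 1}(-1)^{i+1}\pi_i$.  In the symmetric diagonal cell the integer-matrix growth rule (with $\rho=\sigma=\nu$) reads
\[
\lambda_1=\nu_1+a_{ii},\qquad \lambda_i=\nu_i+(\nu_{i-1}-\mu_{i-1})\ \ (i\ge 2),
\]
so writing $d_i=\nu_i-\mu_i\ge 0$ one has $\lambda_1-\mu_1=d_1+a_{ii}$ and $\lambda_i-\mu_i=d_i+d_{i-1}$ for $i\ge 2$.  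The alternating sum $\sum_i(-1)^{i+1}(\lambda_i-\mu_i)$ telescopes to $a_{ii}$, which is exactly your local claim.  No corner-by-corner case analysis is needed.
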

The term ``odd columns'' alludes to the Young diagram of $\lambda$, whose rows are the parts of $\lambda$.
The columns of $\lambda$ are the parts of the conjugate partition $\lambda'$.
Sch\"utzenberger's lemma gives the following refinement of Theorem~\ref{theorem:rsk-symmetry}:
\begin{theorem}
  \label{theorem:schuetzenberger-refined-model}
  For all non-negative integers $k$ and $l$,
  \begin{displaymath}
    \Sym^k(\CC^n)\otimes \Sym^l(\wedge^2 \CC^n) = \bigoplus_{\lambda\vdash k+2l \text{ with $k$ odd columns }} W^n_\lambda.
  \end{displaymath}
\end{theorem}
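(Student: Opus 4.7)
The plan is to specialize the character computation of Theorem~\ref{theorem:rsk-symmetry} to a fixed $k,l$, and then feed Schützenberger's lemma into it.

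First I would identify, inside the basis $\{v_A\}$ of $\Sym(\CC^n\oplus \wedge^2\CC^n)$ introduced in the proof of Theorem~\ref{theorem:rsk-symmetry}, which vectors lie in the graded piece $\Sym^k(\CC^n)\otimes \Sym^l(\wedge^2\CC^n)$. Since
\begin{displaymath}
  v_A = \prod_{i=1}^n e_i^{a_{ii}} \prod_{i<j} (e_i\wedge e_j)^{a_{ij}}
\end{displaymath}
has degree $\tr(A)$ in the $\Sym(\CC^n)$ factor and degree $\sum_{i<j}a_{ij}$ in the $\Sym(\wedge^2\CC^n)$ factor, the vectors $v_A$ with $A$ a symmetric $n\times n$ matrix of trace $k$ and total sum $k+2l$ form a basis of $\Sym^k(\CC^n)\otimes \Sym^l(\wedge^2\CC^n)$. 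Each such $v_A$ is an eigenvector of $\Delta(x_1,\dots,x_n)$ with eigenvalue $x^\nu$, where $\nu$ is the row-sum vector of $A$.

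Next I would compute the character by summing over this basis and applying the RSK bijection $\RSK\colon \M^{\mathrm{sym}}_{\nu\nu}\tilde\to \coprod_\lambda \Tab(\lambda,\nu)$ restricted, via Theorem~\ref{theorem:schuetzenberger-lemma}, to symmetric matrices of trace $k$:
\begin{align*}
  \ch_W(x_1,\dots,x_n) &= \sum_\nu \#\{A\in \M^{\mathrm{sym}}_{\nu\nu}\mid \tr(A)=k\}\,x^\nu \\
                      &= \sum_\nu \sum_{\substack{\lambda\vdash k+2l \\ \lambda \text{ has } k \text{ odd columns}}} |\Tab(\lambda,\nu)|\,x^\nu \\
                      &= \sum_{\substack{\lambda\vdash k+2l \\ \lambda \text{ has } k \text{ odd columns}}} s_\lambda(x_1,\dots,x_n).
\end{align*}
Since $W = \Sym^k(\CC^n)\otimes \Sym^l(\wedge^2\CC^n)$ is a polynomial representation of $\gl n$, matching characters with the claimed decomposition and invoking the theorem at the end of Section~\ref{sec:gln-chars} (that polynomial representations of $\gl n$ are determined by their characters) yields the result.

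I do not expect a genuine obstacle: the essential combinatorial content is Schützenberger's lemma (Theorem~\ref{theorem:schuetzenberger-lemma}), which is quoted, and the representation-theoretic content is the character characterization of polynomial representations. The only care needed is the book-keeping step that identifies ``trace of $A$'' with the degree in the $\Sym(\CC^n)$ factor of $v_A$ and ``off-diagonal sum'' with the degree in the $\Sym(\wedge^2\CC^n)$ factor, which is immediate from the definition of $v_A$.
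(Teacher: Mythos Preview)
Your proposal is correct and is precisely the argument the paper has in mind: the text just says ``Sch\"utzenberger's lemma gives the following refinement of Theorem~\ref{theorem:rsk-symmetry}'', and what you wrote is exactly that refinement---restrict the weight-basis $\{v_A\}$ of Theorem~\ref{theorem:rsk-symmetry} to symmetric matrices with $\tr(A)=k$ and $\sum_{i<j}a_{ij}=l$, then use Theorem~\ref{theorem:schuetzenberger-lemma} to pin down the shapes. No changes needed.
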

Taking $k=0$ gives a better-known special case (see Benson and Ratcliff \cite[Theorem~4.3.1]{doi:10.1142/9789812562500_0006})
\begin{theorem}
  \label{theorem:burge-no-loops}
  For every positive integer $l$,
  \begin{displaymath}
    \Sym^l(\wedge^2 \CC^n) = \bigoplus_{\lambda\vdash 2l \text{with every part appearing even number of times}}W^n_\lambda.
  \end{displaymath}
\end{theorem}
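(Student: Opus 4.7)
The plan is to obtain this statement as the special case $k=0$ of Theorem~\ref{theorem:schuetzenberger-refined-model}, with the only substantive work being a combinatorial translation of the indexing condition. Setting $k=0$ in that theorem, the left-hand side becomes $\Sym^0(\CC^n)\otimes\Sym^l(\wedge^2\CC^n)\cong\Sym^l(\wedge^2\CC^n)$, and the right-hand side becomes $\bigoplus W^n_\lambda$ over $\lambda\vdash 2l$ having no odd columns, i.e.\ with every column of $\lambda$ of even length. It therefore suffices to show that a partition $\lambda$ has all column lengths even if and only if every part of $\lambda$ appears an even number of times.

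For this equivalence, recall that $\lambda'_i=\#\{j\mid\lambda_j\geq i\}$. Writing the distinct positive parts of $\lambda$ as $a_1>a_2>\dotsb>a_r$ with multiplicities $m_1,\dotsc,m_r$, and setting $a_{r+1}=0$, one has $\lambda'_i=m_1+\dotsb+m_s$ whenever $a_{s+1}<i\leq a_s$. Hence every $\lambda'_i$ is even precisely when each partial sum $m_1+\dotsb+m_s$ is even, which (by induction on $s$) occurs if and only if each $m_s$ is itself even. Substituting this reformulation into the indexing set from the previous paragraph yields the theorem.

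The only conceivable obstacle is making sure the combinatorial rewriting is handled carefully at the boundary $i=a_r$, which the convention $a_{r+1}=0$ handles cleanly. The representation-theoretic content has already been deposited into Theorems~\ref{theorem:schuetzenberger-lemma} and~\ref{theorem:schuetzenberger-refined-model}, so the argument here is essentially a bookkeeping specialization.
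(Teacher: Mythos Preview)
Your proof is correct and follows exactly the paper's approach: the paper simply states that Theorem~\ref{theorem:burge-no-loops} is obtained by ``taking $k=0$'' in Theorem~\ref{theorem:schuetzenberger-refined-model}, and you do the same, additionally spelling out the elementary combinatorial equivalence (all column lengths of $\lambda$ even $\iff$ every part of $\lambda$ has even multiplicity) that the paper leaves to the reader.
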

\begin{remark}
  The restriction of the RSK correspondence to symmetric matrices with zeros on the diagonal is the correspondence concerning \emph{graphs without loops} in Burge \cite[Section~2]{burge}.
  Theorem~\ref{theorem:burge-no-loops} can be deduced from it.
\end{remark}
\section{The Burge Correspondence}
\label{sec:burg-corr}
Burge \cite{burge} described four variants of the RSK correspondence.
We begin with a correspondence which, although absent from Burge's paper, is now commonly known as \emph{the Burge Correspondence} \cite[A.4.1]{MR1464693}:
\begin{displaymath}
  \BUR : \M_{\mu\nu}\to \coprod_{\lambda\vdash d} \Tab(\lambda,\nu)\times \Tab(\lambda,\mu).
\end{displaymath}
Indeed, this is a \emph{different} bijection between the same two sets between which the RSK correspondence defines a bijection.
This correspondence shares the symmetry property with the RSK correspondence:
\begin{displaymath}
  \BUR(A) = (P, Q) \text{ if and only if } \BUR(A')=(Q,P),
\end{displaymath}
and consequently induces a bijection:
\begin{equation}
  \label{eq:bur}
  \BUR:\M_{\mu\mu}^{\mathrm{sym}}\tilde\to \coprod_{\lambda\vdash d} \Tab(\lambda,\mu).
\end{equation}
An important difference between the Burge correspondence and the RSK correspondence is that Theorem~\ref{theorem:schuetzenberger-lemma} (Sch\"utzenberger's lemma) fails.
Instead we have \cite[A4.1, Exercise~18]{MR1464693}:
\begin{theorem}[Sch\"utzenberger's lemma for the Burge correspondence]
  \label{theorem:burge-schuetzenberger}
  The Burge correspondence (\ref{eq:bur}) takes a symmetric integer matrix with $k$ odd entries on its diagonal to a semistandard Young tableaux with $k$ odd rows.
\end{theorem}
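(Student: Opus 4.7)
The plan is to adapt the Viennot light-and-shadows proof of the classical Schützenberger lemma (Theorem~\ref{theorem:schuetzenberger-lemma}) to the Burge correspondence. The Burge reading of a matrix reverses the order within each row compared to RSK, which geometrically amounts to reflecting the direction of the light source in Viennot's construction. Under this adapted construction the output shape $\lambda$ is still computed by the nested family of shadow lines, but the role played by diagonal entries is combinatorially different: a pair of dots at a single diagonal position $(i,i)$ now contributes to an \emph{even} row of $\lambda$ rather than each dot contributing a self-symmetric shadow line on its own.

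Concretely, after setting up the Burge-adapted shadow-line construction and checking that it still computes the output shape, I would use the symmetry of $A$ to deduce that the whole shadow diagram is invariant under reflection in the main diagonal. Each shadow line is therefore either self-symmetric or occurs in a symmetric pair with a distinct shadow line. A symmetric pair contributes $2$ to the relevant part of $\lambda$, producing an even part, while each self-symmetric line contributes $1$ and produces an odd part. Since only dots on the diagonal can give rise to self-symmetric shadow lines, the number of odd parts of $\lambda$ equals the number of self-symmetric shadow lines emanating from the diagonal.

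The third step is to analyze how the $a_{ii}$ dots at a fixed diagonal position group under the Burge reading order. The key combinatorial fact is that these dots pair off internally: a symmetric pair of shadow lines arises from each of $\lfloor a_{ii}/2 \rfloor$ dot pairs, together with exactly one self-symmetric shadow line precisely when $a_{ii}$ is odd. Summing over $i$, the total number of self-symmetric shadow lines—and hence the number of odd rows of the output tableau—equals the number of indices $i$ with $a_{ii}$ odd, which is $k$.

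The main technical obstacle is the verification of this diagonal-pairing property, since it is exactly what separates the Burge statement (counting odd diagonal entries) from the RSK statement (counting the trace). A cleaner alternative route is induction on $|A|$ via a corner-removal operation on symmetric matrices compatible with Burge insertion: one deletes a carefully chosen entry, tracks how the shape of the output tableau changes, and in particular tracks the change in the number of odd rows when the deleted cell lies on, or symmetrically around, the diagonal. This inductive route is the one hinted at by Fulton's exercise \cite[A4.1, Ex.~18]{MR1464693}, and it is probably the most economical way to carry out the bookkeeping once the pairing statement above is isolated.
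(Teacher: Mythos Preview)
The paper does not actually prove this theorem: it is stated with a citation to Fulton \cite[A4.1, Exercise~18]{MR1464693} and then immediately used as input to Theorem~\ref{theorem:burge}. So there is no argument in the paper to compare your proposal against; you are supplying a proof where the authors chose to quote one.

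As to your sketch itself, the overall strategy is sound. The Burge correspondence does admit a geometric description parallel to the Viennot/matrix-ball picture for RSK (this is what Fulton's appendix sets up), and for a symmetric matrix the resulting configuration of balls is indeed reflection-symmetric about the main diagonal, so the shadow lines at each stage split into self-symmetric lines and mirror pairs. Your identification of the crux is also correct: what distinguishes the Burge statement from the RSK one is precisely that the $a_{ii}$ balls stacked at a single diagonal cell $(i,i)$ pair off among themselves under the Burge numbering, leaving a lone self-symmetric line exactly when $a_{ii}$ is odd, whereas under RSK each diagonal ball is individually self-symmetric. That claim is true, but it is not automatic from symmetry alone---it depends on the specific total order Burge places on balls occupying the same cell, and you would need to spell out why two consecutively numbered balls at $(i,i)$ land on mirror-image shadow lines rather than on the same self-symmetric one. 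Until that local lemma is written down and checked through the recursive passage to deeper levels of the shadow construction, the argument remains a plan rather than a proof. Your alternative inductive route via a symmetric corner-deletion compatible with Burge insertion is equally viable and is the kind of argument Fulton's exercise is inviting; either way, the diagonal-pairing fact is the one genuine thing left to verify.
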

\begin{theorem}
  \label{theorem:burge}
  For non-negative integers $k$ and $l$, the representation:
  \begin{displaymath}
    W = \wedge^k(\CC^n)\otimes \Sym^l(\Sym^2 \CC^n)
  \end{displaymath}
  of $\gl n$ has decomposition into irreducible representations given by:
  \begin{displaymath}
    W = \bigoplus_{\lambda\vdash k+2l \text{ with $k$ odd rows}} W^n_\lambda.
  \end{displaymath}
\end{theorem}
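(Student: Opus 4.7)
The plan is to mimic the proof of Theorem~\ref{theorem:rsk-symmetry}, with the Burge correspondence (\ref{eq:bur}) and Theorem~\ref{theorem:burge-schuetzenberger} playing the roles of the RSK correspondence and Sch\"utzenberger's lemma. The crucial change is how a symmetric matrix parameterizes a basis vector of $W$: now the \emph{parity} of each diagonal entry, rather than the entire diagonal, is controlled by the $\wedge^k(\CC^n)$ factor, while the $\Sym^l(\Sym^2\CC^n)$ factor supplies everything else.

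First, I would exhibit a basis of weight vectors of $W$ indexed by symmetric $n\times n$ non-negative integer matrices $A=(a_{ij})$ with total entry sum $k+2l$ and with exactly $k$ odd diagonal entries. For each such $A$, set $S=\{i : a_{ii}\text{ is odd}\}$, which is automatically a $k$-subset of $\{1,\dotsc,n\}$, and define
\begin{displaymath}
  v_A \;=\; \Bigl(\bigwedge_{i\in S} e_i\Bigr) \;\otimes\; \prod_{i=1}^n (e_i\cdot e_i)^{\lfloor a_{ii}/2\rfloor} \prod_{i<j}(e_i\cdot e_j)^{a_{ij}},
\end{displaymath}
where the wedge is taken in increasing order of $i\in S$ and the remaining factor lies in $\Sym^l(\Sym^2\CC^n)$. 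A routine bookkeeping check shows that $A\mapsto v_A$ is a bijection between such matrices and a basis of $W$, and that $v_A$ is a $T_n$-eigenvector with weight equal to the row-sum vector $\mu(A)$ defined by $\mu(A)_i=\sum_j a_{ij}$.

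Second, I would compute the character and apply (\ref{eq:bur}) fiberwise over the weight. By Theorem~\ref{theorem:burge-schuetzenberger}, the Burge bijection restricts, for each $\mu$, to a bijection between symmetric non-negative integer matrices with row-sum vector $\mu$ and exactly $k$ odd diagonal entries, and semistandard Young tableaux of shape $\lambda\vdash k+2l$ with $k$ odd rows and weight $\mu$. Therefore
\begin{displaymath}
  \ch_W(x) \;=\; \sum_{\mu}\sum_{\lambda}|\Tab(\lambda,\mu)|\,x^\mu \;=\; \sum_\lambda s_\lambda(x) \;=\; \sum_\lambda \ch_{W^n_\lambda}(x),
\end{displaymath}
where $\lambda$ runs over partitions of $k+2l$ with $k$ odd rows. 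The character criterion of Section~\ref{sec:gln-chars} then identifies $W$ with $\bigoplus_\lambda W^n_\lambda$ over the stated $\lambda$'s.

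The main obstacle is the encoding step: one must verify that the ``even part'' of each diagonal entry, carried by $(e_i\cdot e_i)^{\lfloor a_{ii}/2\rfloor}$, together with the ``odd part'' carried by the wedge $\bigwedge_{i\in S}e_i$, accounts for an arbitrary non-negative integer diagonal entry exactly once, and that the off-diagonal factors $(e_i\cdot e_j)^{a_{ij}}$ for $i<j$ capture the remaining freedom of a symmetric matrix. Once this dictionary is in place, the rest of the proof is a direct transcription of the argument used for Theorem~\ref{theorem:rsk-symmetry}.
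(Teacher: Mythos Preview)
Your proposal is correct and follows essentially the same route as the paper: the paper also decomposes each symmetric matrix $A$ as $A'+A''$ with $A'$ the diagonal parity matrix and $A''$ having even diagonal, sets $b_{ii}=a''_{ii}/2=\lfloor a_{ii}/2\rfloor$ and $b_{ij}=a_{ij}$, defines $v_A=\bigwedge_{a'_{ii}=1}e_i\otimes\prod_{i\leq j}(e_ie_j)^{b_{ij}}$, and then applies Theorem~\ref{theorem:burge-schuetzenberger} to compute the character. Your parameterization, weight computation, and use of the Burge Sch\"utzenberger lemma coincide with the paper's almost verbatim.
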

\begin{proof}
  Given a symmetric integer matrix $A$ with $k$ odd diagonal entries, write $A = A' + A''$, where $A'$ is a diagonal matrix with diagonal entries $0$ or $1$, and $A''$ is an integer matrix with even diagonal entries.
  Write $b_{ii}=a''_{ii}/2$ (half of the $(i,i)$th entry of $A''$) and $b_{ij}=a''_{ij}$ for $i\neq j$.
  Set
  \begin{displaymath}
    v_A = \bigwedge_{a'_{ii}=1} e_i \otimes \prod_{i\leq j} (e_ie_j)^{b_{ij}}.
  \end{displaymath}
  As $A$ runs over symmetric matrices with non-negative integer entries, $k$ odd entries on the diagonal, and off-diagonal entries summing to $2l$, $v_A$ forms a basis of eigenvectors for the action of $\Delta(x_1,\dotsc,x_n)$ on $W$.
  Moreover, when $A\in \M^{\mathrm{sym}}_{\mu\mu}$ then $v_A$ has eigenvalue $x^\mu$.

  Sch\"utzenberger's lemma for the Burge correspondence (Theorem~\ref{theorem:burge-schuetzenberger}) gives:
  \begin{displaymath}
    \ch_W(x) = \sum_{\lambda\vdash k+2l \text{ with $k$ odd rows}} s_\lambda(x),
  \end{displaymath}
  whence the theorem follows.
\end{proof}
When restricted to matrices with even diagonal entries, the correspondence (\ref{eq:bur}) becomes the correspondence in \cite[Section~3]{burge} concerning \emph{graphs with loops and multiple edges}.
In terms of Theorem~\ref{theorem:burge}, this is the special case where $k=0$:
\begin{theorem}
  \label{theorem:symsym}
  For every non-negative integer $k$,
  \begin{displaymath}
    \Sym^k(\Sym^2\CC^n) = \bigoplus_{\lambda\vdash 2k \text{with all parts even}} W^n_\lambda.
  \end{displaymath}
\end{theorem}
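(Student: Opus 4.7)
The plan is to obtain Theorem~\ref{theorem:symsym} as the special case of Theorem~\ref{theorem:burge} in which the alternating factor is trivial, i.e.\ the wedge exponent (denoted $k$ in Theorem~\ref{theorem:burge}) equals $0$; to avoid clashing with the symbol $k$ appearing in the statement of Theorem~\ref{theorem:symsym}, I will for the duration of the proof call the wedge exponent $r$ and reserve $k$ for the outer symmetric power, so that Theorem~\ref{theorem:burge} reads
\begin{displaymath}
  \wedge^r(\CC^n)\otimes \Sym^k(\Sym^2 \CC^n) \;=\; \bigoplus_{\lambda\vdash r+2k,\; \lambda\text{ has }r\text{ odd rows}} W^n_\lambda.
\end{displaymath}
Setting $r=0$ collapses the alternating tensor to the trivial one-dimensional representation $\wedge^0 \CC^n=\CC$, so the left-hand side becomes $\Sym^k(\Sym^2\CC^n)$, and the index on the right-hand side becomes partitions of $2k$ with zero odd rows, i.e.\ partitions of $2k$ all of whose parts are even. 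This is precisely the desired decomposition.

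For a reader who prefers to see the weight-vector argument self-contained, the plan is to retrace the proof of Theorem~\ref{theorem:burge} with the simplifications available when $r=0$. One indexes a weight basis of $\Sym^k(\Sym^2\CC^n)$ by symmetric $n\times n$ matrices $A$ with non-negative integer entries, \emph{even} diagonal entries, and total sum $2k$: to such an $A$ associate
\begin{displaymath}
  v_A \;=\; \prod_{i<j} (e_i e_j)^{a_{ij}} \prod_{i} e_i^{\,a_{ii}},
\end{displaymath}
which is an eigenvector of $\Delta(x_1,\dotsc,x_n)$ with eigenvalue $x^\mu$ when the row sums of $A$ equal $\mu$. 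Then $\ch_W(x)=\sum_\mu |\M^{\mathrm{sym},\text{even diag}}_{\mu\mu}|\, x^\mu$, and the restriction of the Burge correspondence (\ref{eq:bur}) to symmetric matrices with even diagonal entries (Theorem~\ref{theorem:burge-schuetzenberger} with zero odd diagonal entries) is a bijection onto $\coprod_\lambda \Tab(\lambda,\mu)$ where $\lambda$ ranges over partitions of $2k$ with no odd rows; summing over $\mu$ converts this to $\sum_{\lambda} s_\lambda(x)$, matching the claimed character.

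There is no real obstacle here: everything reduces either to invoking the already-proved Theorem~\ref{theorem:burge} or to unwinding one case of its proof. The only mildly delicate point is the bookkeeping in the weight basis, where one must remember that it is the diagonal entry $a_{ii}$ (not $a_{ii}/2$) that contributes the exponent to $e_i$ in $v_A$, since $\Sym^2\CC^n$ is being realized inside the symmetric algebra on $\CC^n$; this is what forces the restriction to matrices with even diagonal entries and matches the ``all parts even'' condition under the Burge bijection.
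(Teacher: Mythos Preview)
Your proposal is correct and matches the paper's own treatment exactly: the paper obtains Theorem~\ref{theorem:symsym} as the special case of Theorem~\ref{theorem:burge} with wedge exponent zero, noting that the restriction of the Burge correspondence to symmetric matrices with even diagonal is Burge's correspondence for \emph{graphs with loops and multiple edges}. Your optional self-contained retracing is also fine, though it is cleaner to write the diagonal contribution in $v_A$ as $(e_ie_i)^{a_{ii}/2}$ rather than $e_i^{\,a_{ii}}$, since the basis lives in $\Sym^k(\Sym^2\CC^n)$ and not in $\Sym^{2k}\CC^n$ (the multiplication map between these is not injective).
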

\begin{remark}
  \label{remark:howe-symsym}
  Theorem~\ref{theorem:symsym} appears in Howe \cite[Theorem~3.1]{MR1321638} with a different proof.
\end{remark}
\begin{remark}[Another Gelfand model]
  We get a second model (see Remark~\ref{remark:gelfand-model}) for the polynomial representation theory of $\gl n$, namely:
  \begin{displaymath}
    \bigoplus_{k,l\geq 0} \wedge^k \CC^n \otimes \Sym^l(\Sym^2\CC^n).
  \end{displaymath}
\end{remark}
\section{Two more correspondences of Burge}
\label{sec:two-more}
Besides the restrictions of RSK and BUR to symmetric matrices with zeros on the diagonal, the article of Burge \cite{burge} contains two more correspondences.
Let $\N^{\mathrm{sym}}_{\mu\mu}$ denote matrices in $\M^{\mathrm{sym}}_{\mu\mu}$ with entries in $\{0,1\}$.
Let $\N_{\mu\mu}^{\mathrm{sym}, \mathrm{tr}=0}$ denote the subset of $\N^{\mathrm{sym}}_{\mu\mu}$ consisting of matrices with trace zero.
Given a partition $\lambda$, for each $i$ such that $\lambda_i\geq i$, let $\alpha_i=\lambda_i-i$, and $\beta_i = \lambda'_i-i$.
The largest index $d$ such that $\lambda_d\geq d$ is called the \emph{Durfee rank} of $\lambda$, and $(\alpha_1,\dotsc, \alpha_d|\beta_1,\dotsc,\beta_d)$ are called the \emph{Frobenius coordinates} of $\lambda$.
\begin{definition}
  [Threshold Partition]
  A partition $\lambda$ is called a \emph{threshold partition} if its Frobenius coordinates satisfy $\beta_i=\alpha_i+1$ for $i=1,\dotsc,d$, where $d$ is the Durfee rank of $\lambda$.
\end{definition}
Let $\TP(n)$ denote the set of threshold partitions of $n$.
Let $\mu$ be a partition of $d$.
The correspondences of Burge concerning \emph{graphs without loops or multiple edges} \cite[Section~4]{burge} and \emph{graphs without multiple edges} \cite[Section~5]{burge} are:
\begin{align*}
  \BUR_1:& N_{\mu\mu}^{\mathrm{sym},\text{tr}=0}\tilde\to \coprod_{\lambda\in \TP(d)} \Tab(\lambda,\mu)\\
  \BUR_2:& N_{\mu\mu}^{\mathrm{sym}}\tilde\to \coprod_{\lambda'\in \TP(d)} \Tab(\lambda,\mu).
\end{align*}
Their significance in terms of the representation theory of $\gl n$ are given by:
\begin{theorem}
  \label{theorem:burge2}
  For every non-negative integer $d$, we have:
  \begin{align*}
    \wedge^d(\wedge^2\CC^n) &\cong \bigoplus_{\lambda\in \TP(d)} W^n_\lambda,\\
    \wedge^d(\Sym^2\CC^n) & \cong \bigoplus_{\lambda'\in \TP(d)} W^n_\lambda.
  \end{align*}
\end{theorem}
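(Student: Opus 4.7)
The plan is to follow the template established in the proofs of Theorems~\ref{theorem:rsk-symmetry} and \ref{theorem:burge}: for each side of the isomorphism, exhibit a basis of weight vectors naturally parametrized by the source of the relevant Burge bijection, read off the character as a monomial generating function over matrices, and then use the bijection to rewrite this generating function as a sum of Schur polynomials indexed by threshold partitions (or their conjugates).

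For the first decomposition, I would take the basis
\begin{displaymath}
  v_A = \bigwedge_{\substack{i<j\\ a_{ij}=1}} (e_i\wedge e_j)
\end{displaymath}
of $\wedge^d(\wedge^2\CC^n)$, where $A$ ranges over symmetric $\{0,1\}$-matrices with vanishing diagonal having $d$ ones above the diagonal. Each $v_A$ is an eigenvector for $\Delta(x_1,\dotsc,x_n)$, and when $A\in \N^{\mathrm{sym},\mathrm{tr}=0}_{\mu\mu}$ the corresponding eigenvalue is $x^\mu$, so that
\begin{align*}
  \ch_{\wedge^d(\wedge^2\CC^n)}(x) &= \sum_{\mu} |\N^{\mathrm{sym},\mathrm{tr}=0}_{\mu\mu}|\, x^\mu\\
  &= \sum_{\mu} \sum_{\lambda\in\TP(d)} |\Tab(\lambda,\mu)|\, x^\mu \;=\; \sum_{\lambda\in\TP(d)} s_\lambda(x),
\end{align*}
where the middle equality invokes $\BUR_1$. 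Comparing with characters of irreducibles gives the first isomorphism.

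For the second decomposition, the analogous basis is $v_A = \bigwedge_{i\leq j,\, a_{ij}=1} e_ie_j$, indexed now by symmetric $\{0,1\}$-matrices with possibly nonzero diagonal in $\N^{\mathrm{sym}}_{\mu\mu}$, and $\BUR_2$ plays the role of $\BUR_1$; the threshold condition lands on the conjugate $\lambda'$ because of how the tableau shape is recorded by $\BUR_2$. The main obstacle is a slightly subtle weight-bookkeeping on the symmetric-square side: the vector $e_ie_j$ carries weight $e_i+e_j$, so a ``loop'' $e_i^2$ contributes $2e_i$, and one must check that the weight of $v_A$ matches the parameter $\mu$ used in the source of $\BUR_2$. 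This is the analogue of (and slightly more delicate than) the splitting $A=A'+A''$ used in the proof of Theorem~\ref{theorem:burge}; once this alignment is verified, the character computation proceeds exactly as in the first half and yields $\ch_W(x)=\sum_{\lambda'\in\TP(d)}s_\lambda(x)$, from which the second isomorphism follows.
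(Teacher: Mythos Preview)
Your proof is correct and essentially the same as the paper's: exhibit a weight basis of $\wedge^d(\wedge^2\CC^n)$ indexed by traceless symmetric $\{0,1\}$-matrices, use $\BUR_1$ to rewrite the resulting character as $\sum_{\lambda\in\TP}s_\lambda$, and treat the $\Sym^2$ case analogously with $\BUR_2$. The paper spells out only the first identity and declares the second ``similar''; the diagonal weight-bookkeeping you flag (a loop $e_i^2$ has weight $2e_i$ while contributing $1$ to the row sum of $A$) is a legitimate point of care that the paper glosses over, but it is a matter of aligning the degree-sequence convention with Burge's source set rather than a genuine obstacle.
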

\begin{proof}
  Given a symmetric matrix $A$ with entries in $\{0,1\}$, define
  \begin{displaymath}
    v_A = \bigwedge_{\{(i,j)\mid i<j,\;a_{ij}=1\}} e_i\wedge e_j,
  \end{displaymath}
  taking the terms in lexicographic order.
  As $A$ runs over symmetric matrices with entries in $\{0,1\}$, zeros on the diagonal, and all entries summing to $d$, $v_A$ forms a basis of $\wedge^d(\wedge^2\CC^n)$.
  Moreover, if $A\in \N_{\mu\mu}^{\mathrm{sym},\mathrm{tr}=0}$, $v_A$ is an eigenvector for the action of $\Delta(x_1,\dotsc,x_n)$ with eigenvalue $x^\mu$.
  Now using the bijection $\BUR_1$, it is easy to see that the characters of both sides in the first identity in Theorem~\ref{theorem:burge2} are equal.
  The proof of the second identity is similar.
\end{proof}
\section{Representations of Symmetric Groups}
\label{sec:repr-symm-groups}
If $W_1$ and $W_2$ are polynomial representations of $\gl n$ of degree $d$, and $T:W_1\to W_2$ is a homomorphism of representations, then for each $\mu$, $T(W_1(\mu))\subset W_2(\mu)$.
Here $W_i(\mu)$ denotes the $\mu$ weight space of $W_i$ (Definition~\ref{definition:weight-vector}).
The symmetric group $S_n$ may be regarded as a subgroup of $\gl n$ via permutation matrices.
Given a polynomial representation $W$ of $\gl n$ of \emph{degree $n$}, let $R_n(W)=W(1,\dotsc,1)$.
Clearly, the action of $S_n$ leaves $R_n(W)$ invariant, making it a representation of the symmetric group.
Moreover, for a morphism $T:W_1\to W_2$ of polynomial representations of $\gl n$, let $R_n(T):R_n(W_1)\to R_n(W_2)$ denote the linear map obtained by restriction of $T$ to $R_n(W)$.
We have a functor:
\begin{displaymath}
  R_n: \Rep^n(\gl n)\to \Rep(S_n),
\end{displaymath}
where $\Rep^n(\gl n)$ denotes the category of homogeneous polynomial representations of $\gl n$ of degree $n$, and $\Rep(S_n)$ denotes the category of complex representations of $S_n$.
Given a partition $\lambda=(\lambda_1,\dotsc,\lambda_l)$ of $n$, define:
\begin{displaymath}
  X_\lambda = \{(S_1,\dotsc,S_l)\mid S_1\sqcup \dotsb \sqcup S_l = \{1,\dotsc,n\}, |S_i|=\lambda_i\}.
\end{displaymath}
The action of $S_n$ on $\{1,\dotsc,n\}$ induces an action on $X_\lambda$.

Recall a characterization of irreducible complex representations of $S_n$:
\begin{theorem}
  \label{theorem:characterization}
  \cite[Section~4.4]{rtcv}
  For every partition $\lambda$ of $n$, there exists a unique representation $V_\lambda$ of $S_n$ which occurs in both $K[X_\lambda]$ and $K[X_{\lambda'}]\otimes \epsilon$.
  As $\lambda$ runs over all partition of $n$, $V_\lambda$ forms a set of representatives of isomorphism classes of irreducible representations of $S_n$.
\end{theorem}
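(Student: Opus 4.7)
The plan is to define $V_\lambda := R_n(W^n_\lambda)$ for every $\lambda\vdash n$ and deduce the theorem from the structure theorem for polynomial representations of $\gl n$ stated in Section \ref{sec:gln-chars}, combined with two applications of the functor $R_n$ to the $(\gl m, \gl n)$-duality of Theorem \ref{theorem:rsk}.

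First, I would identify $R_n(\Sym^\lambda\CC^n) \cong K[X_\lambda]$ and $R_n(\wedge^\lambda \CC^n) \cong K[X_\lambda]\otimes \epsilon$ as $S_n$-modules. The first identification is immediate on the natural weight basis: a weight $(1,\dotsc,1)$ vector in $\Sym^\lambda \CC^n$ is a tensor of monomials $\prod_{j\in S_i} e_j$ indexed by an ordered set partition $(S_1,\dotsc,S_l)\in X_\lambda$, and $S_n$ permutes these basis vectors. The second requires a short character computation: a permutation $\sigma\in S_n$ fixes the wedge basis vector attached to $(S_1,\dotsc,S_l)$ iff it preserves each block setwise, in which case the sign from reordering each wedge is $\prod_i \mathrm{sgn}(\sigma|_{S_i}) = \mathrm{sgn}(\sigma)$, matching the character of $K[X_\lambda]\otimes \epsilon$. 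Combined with the exactness of $R_n$ and the containments $W^n_\lambda \hookrightarrow \Sym^\lambda\CC^n$ and $W^n_\lambda \hookrightarrow \wedge^{\lambda'}\CC^n$ from Section \ref{sec:gln-chars}, this shows $V_\lambda$ is a subrepresentation of both $K[X_\lambda]$ and $K[X_{\lambda'}]\otimes \epsilon$.

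The core step is to show that the $V_\lambda$ are irreducible, pairwise non-isomorphic, and exhaust the irreducible representations of $S_n$. Setting $m=d=n$ in Theorem \ref{theorem:rsk} and applying $R_n$ to the $\gl m$ factor yields the Schur-Weyl decomposition $\otimes^n \CC^n \cong \bigoplus_{\lambda\vdash n} W^n_\lambda \otimes V_\lambda$ as a $\gl n \times S_n$-module; applying $R_n$ to the remaining $\gl n$ factor then gives the bimodule isomorphism $K[S_n] \cong \bigoplus_{\lambda\vdash n} V_\lambda\otimes V_\lambda$, in which the two copies of $S_n$ act by left and right translation. Matching against the canonical Wedderburn decomposition $K[S_n] \cong \bigoplus_U U\otimes U^*$, and using that every irreducible $S_n$-representation is self-dual (every permutation is conjugate to its inverse), the right-hand side is $\bigoplus_U U\otimes U$ with each summand an irreducible bimodule of multiplicity one. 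If $V_\lambda$ contained two non-isomorphic irreducible constituents $U$ and $U'$, then $V_\lambda\otimes V_\lambda$ would contain the bimodule $U\otimes U'$, which is absent on the right. So each $V_\lambda$ is isotypic with some irreducible type $U(\lambda)$ and multiplicity $m_\lambda \geq 1$; non-vanishing of $V_\lambda$ follows from the dimension of its all-ones weight space being the number of standard Young tableaux of shape $\lambda$. Comparing multiplicities of $U\otimes U$ gives $\sum_{\lambda\colon U(\lambda)=U} m_\lambda^2 = 1$ for every irreducible $U$, forcing $\lambda\mapsto V_\lambda$ to be a bijection between partitions of $n$ and isomorphism classes of irreducibles of $S_n$, with each $V_\lambda$ irreducible.

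Finally, uniqueness of the common constituent: once the $V_\mu$ are known to be all the distinct irreducibles, applying $R_n$ to the expansion $\Sym^\lambda\CC^n = \bigoplus_\nu K_{\nu\lambda}W^n_\nu$ shows that $V_\mu$ appears in $K[X_\lambda]$ with multiplicity $K_{\mu\lambda}$, which is positive iff $\mu \trianglerighteq \lambda$ in dominance order. Symmetrically, $V_\mu$ appearing in $K[X_{\lambda'}]\otimes \epsilon$ forces $\mu \trianglelefteq \lambda$, so $\mu=\lambda$. The main obstacle is the third paragraph: converting the abstract bimodule identity $K[S_n]\cong\bigoplus V_\lambda\otimes V_\lambda$ into irreducibility, distinctness and exhaustion of the $V_\lambda$; this conversion hinges on the self-duality of $S_n$-representations and on the non-vanishing of each $V_\lambda$.
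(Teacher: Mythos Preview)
The paper does not prove this theorem at all: it is quoted with the citation \cite[Section~4.4]{rtcv} as background, and is then \emph{used} in the very next theorem to show that $R_n(W^n_\lambda)\cong V_\lambda$. Your proposal therefore cannot be ``the same approach as the paper''; rather, it supplies an independent proof and in doing so reverses the paper's logical order: you \emph{define} $V_\lambda:=R_n(W^n_\lambda)$ and then establish the characterization, whereas the paper assumes the characterization and deduces the identification $R_n(W^n_\lambda)\cong V_\lambda$.

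Your argument is sound. The identifications $R_n(\Sym^\lambda\CC^n)\cong K[X_\lambda]$ and $R_n(\wedge^{\lambda}\CC^n)\cong K[X_{\lambda}]\otimes\epsilon$ are exactly what the paper proves in the next theorem, so you are not importing anything new there. Deriving the bimodule decomposition $K[S_n]\cong\bigoplus_\lambda V_\lambda\otimes V_\lambda$ by applying $R_n$ twice to Theorem~\ref{theorem:rsk} is legitimate and not circular, since you never invoke Theorem~\ref{theorem:characterization} along the way; the Schur--Weyl step you use is just $R_m\otimes\mathrm{id}$ applied to $(\gl m,\gl n)$-duality with your \emph{definition} of $V_\lambda$ inserted. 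The Wedderburn comparison, together with self-duality of $S_n$-modules and the nonvanishing $\dim V_\lambda=|\mathrm{SYT}(\lambda)|>0$, cleanly forces irreducibility, distinctness, and exhaustion as you wrote. For uniqueness you appeal to the Kostka decomposition $\Sym^\lambda\CC^n=\bigoplus_\nu K_{\nu\lambda}W^n_\nu$ and the dominance criterion $K_{\mu\lambda}>0\iff\mu\trianglerighteq\lambda$; these are standard (and are in the cited reference \cite{rtcv}) but are not stated in the present paper, so if you want the proof to be self-contained within this article you should flag them as external inputs.
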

\begin{theorem}
  For every partition $\lambda$ of $n$, $R_n(W_\lambda)\cong V_\lambda$.
\end{theorem}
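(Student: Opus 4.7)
The plan is to apply the characterization of $V_\lambda$ from Theorem~\ref{theorem:characterization}: $V_\lambda$ is the unique irreducible $S_n$-representation that occurs in both $K[X_\lambda]$ and $K[X_{\lambda'}] \otimes \epsilon$. From the structure theorem for polynomial representations recalled in Section~\ref{sec:gln-chars}, $W^n_\lambda$ embeds in both $\Sym^\lambda \CC^n$ and $\wedge^{\lambda'} \CC^n$, and the functoriality observation at the start of this section says that $R_n$ sends inclusions to inclusions. Hence $R_n(W^n_\lambda)$ will embed as an $S_n$-subrepresentation in both $R_n(\Sym^\lambda \CC^n)$ and $R_n(\wedge^{\lambda'} \CC^n)$, and it suffices to identify these two ambient spaces with $K[X_\lambda]$ and $K[X_{\lambda'}] \otimes \epsilon$ respectively.

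For the symmetric side, a basis of the $(1,\dotsc,1)$-weight space of $\Sym^\lambda \CC^n = \bigotimes_i \Sym^{\lambda_i} \CC^n$ is given by the vectors $\bigotimes_i \prod_{j \in S_i} e_j$, indexed by $(S_1,\dotsc,S_l) \in X_\lambda$, and $S_n$ permutes them by permuting the sets $S_i$; so $R_n(\Sym^\lambda \CC^n) \cong K[X_\lambda]$. For the alternating side, a basis of the $(1,\dotsc,1)$-weight space of $\wedge^{\lambda'} \CC^n$ is given similarly by $\bigotimes_i (\bigwedge_{j \in S_i,\text{ increasing}} e_j)$, indexed by $(S_1,\dotsc,S_k) \in X_{\lambda'}$, but now $S_n$ permutes the basis only up to a sign. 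Rather than constructing an intertwiner directly, I would compute characters: if $\sigma \in S_n$ fixes the tuple $(S_1,\dotsc,S_k)$, its contribution to the trace is $\prod_i \mathrm{sgn}(\sigma|_{S_i}) = \mathrm{sgn}(\sigma)$, since $\sigma$ is the disjoint product of its restrictions to the $S_i$'s. Thus the character of $R_n(\wedge^{\lambda'} \CC^n)$ at $\sigma$ equals $\mathrm{sgn}(\sigma) \cdot |\mathrm{Fix}_\sigma(X_{\lambda'})|$, which is precisely the character of $K[X_{\lambda'}] \otimes \epsilon$, yielding the isomorphism $R_n(\wedge^{\lambda'} \CC^n) \cong K[X_{\lambda'}] \otimes \epsilon$.

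Combining the two embeddings, $R_n(W^n_\lambda)$ sits inside both $K[X_\lambda]$ and $K[X_{\lambda'}] \otimes \epsilon$, so every irreducible constituent of it must be $V_\lambda$ by Theorem~\ref{theorem:characterization}; hence $R_n(W^n_\lambda) \cong V_\lambda^{\oplus m}$ for some $m \geq 0$. A dimension count closes the argument: $\dim R_n(W^n_\lambda)$ equals the dimension of the $(1,\dotsc,1)$-weight space of $W^n_\lambda$, which is the Kostka number $K_{\lambda,(1^n)} = f^\lambda = \dim V_\lambda$ (the number of standard Young tableaux of shape $\lambda$), forcing $m = 1$. I expect the main obstacle to be the sign bookkeeping on the alternating side; the character-theoretic shortcut above handles it cleanly, while an explicit equivariant isomorphism could alternatively be built by choosing a sign cocycle on $X_{\lambda'}$.
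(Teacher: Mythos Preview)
Your proposal is correct and follows essentially the same strategy as the paper's proof: identify $R_n(\Sym^\lambda\CC^n)\cong K[X_\lambda]$ and $R_n(\wedge^{\lambda'}\CC^n)\cong K[X_{\lambda'}]\otimes\epsilon$, then invoke Theorem~\ref{theorem:characterization} together with a nonvanishing/dimension check. The only differences are cosmetic: the paper dispatches the alternating side with the word ``Similarly'' where you spell out the character computation, and the paper merely checks $\dim R_n(W_\lambda)>0$ (implicitly relying on $V_\lambda$ having multiplicity one in $K[X_\lambda]$) whereas you match dimensions exactly via $K_{\lambda,(1^n)}=f^\lambda=\dim V_\lambda$.
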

\begin{proof}
  Every submultiset of $\{1,\dotsc,n\}$ is of the form\linebreak $I=\{1^{m_1},\dotsc,n^{m_n}\}$, where $m_i\geq 0$ denotes the multiplicity of $i$ in $I$.
  The size of $I$ is, by definition, the sum $m=m_1+\dotsb+m_n$.
  Then $e_I=\prod_{i=1}^ne_i^{m_i}$ may be regarded as a tensor in $\Sym^m \CC^n$.
  Let $J(n,m)$ denote the set of all submultisets of $n$ of size $m$.
  For $S=(S_1,\dotsc,S_l)\in \prod_{k=1}^l J(n,\lambda_k)$, define
  \begin{displaymath}
    e_S = e_{S_1}\otimes \dotsb \otimes e_{S_l}\in \Sym^\lambda \CC^n.
  \end{displaymath}
  Then as $S$ runs over elements of $\prod_{i=k}^l J(n,\lambda_k)$, $e_S$ forms a basis of $\Sym^\lambda \CC^n$.
  The basis vector $v_S$ has weight $\mu=(\mu_1,\dotsc,\mu_n)$, where $\mu_i$ is the sum of the multiplicities of $i$ in $S_1,\dotsc,S_l$.
  Thus $v_S$ has weight $(1,\dotsc,1)$ if and only if the submultisets $S_1,\dotsc,S_l$ are actually subsets (each element has multiplicity $0$ or $1$), and moreover, form a partition of $\{1,\dotsc,n\}$.
  In other words, $R_n(\Sym^\lambda \CC^n)$ is spanned by $\{v_S\mid S\in X_\lambda\}$.
  It follows that $R_n(\Sym^\lambda \CC^n)\cong K[X_\lambda]$.
  Similarly, $R_n(\wedge^{\lambda'}\CC^n)\cong K[X_{\lambda'}]\otimes \epsilon$.
  Since $W_\lambda$ occurs in $\Sym^\lambda \CC^n$ and also in $\wedge^{\lambda'}\CC^n$, it follows that $R_n(W_\lambda)$ occurs in $K[X_\lambda]$ and $K[X_{\lambda'}]\otimes \epsilon$.
  Therefore, by Theorem~\ref{theorem:characterization}, $R_n(W_\lambda)$ would have to be isomorphic to $V_\lambda$, unless $R_n(W_\lambda)=0$.
  But $\dim(R_n(W_\lambda))$ is the coefficient of $x_1\dotsb x_n$ in $s_\lambda(x_1,\dotsc,x_n)$, which is the number of tableaux of shape $\lambda$ and weight $(1,\dotsc,1)$ (the \emph{standard} tableaux), which is always positive.
\end{proof}
\begin{theorem}
  [Schur-Weyl duality]
  Let $S_m$ act on $\otimes^m \CC^n$ by permuting the tensor factors, and $\gl n$ act on each tensor factor.
  Then, as a representation of $S_m\times \gl n$,
  \begin{displaymath}
    \otimes^m \CC^n \cong \bigoplus_{\lambda\in \Lambda_n^m} V_\lambda\otimes W_\lambda.
  \end{displaymath}
\end{theorem}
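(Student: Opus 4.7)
My plan is to derive Schur--Weyl duality by applying the functor $R_m$ of Section~\ref{sec:repr-symm-groups} (with $n$ replaced by $m$) to the $\gl m$-side of Theorem~\ref{theorem:rsk}, taking $d=m$. By that theorem,
\[
  \Sym^m(\CC^m\otimes\CC^n) \;\cong\; \bigoplus_{\lambda\vdash m} W^n_\lambda\otimes W^m_\lambda
\]
as a $\gl m\times\gl n$-representation. Since $W^m_\lambda$ is a homogeneous polynomial representation of $\gl m$ of degree $m$ and $W^n_\lambda$ is acted on trivially by $\gl m$, applying $R_m$ to the $\gl m$-factor commutes with the direct sum and the tensor product, yielding $\bigoplus_\lambda W^n_\lambda\otimes R_m(W^m_\lambda)$ as an $S_m\times \gl n$-representation. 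By the preceding theorem $R_m(W^m_\lambda)\cong V_\lambda$, and $W^n_\lambda=0$ unless $\lambda$ has at most $n$ parts, so only $\lambda\in\Lambda^m_n$ contributes.

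The crux of the proof is to identify the left-hand side $R_m(\Sym^m(\CC^m\otimes\CC^n))$ with $\otimes^m\CC^n$ as an $S_m\times\gl n$-representation. Let $f_1,\dotsc,f_m$ be the standard basis of $\CC^m$. Monomials in the vectors $e_i\otimes f_j$ give a weight basis of $\Sym^m(\CC^m\otimes\CC^n)$ with respect to $T_m\times T_n$: the weight with respect to $T_m$ records, for each $j$, the number of factors involving $f_j$. The $(1,\dotsc,1)$-weight space for $T_m$ therefore consists of monomials in which each $f_j$ appears exactly once, i.e.\ vectors of the form
\[
  v_{(i_1,\dotsc,i_m)} \;=\; (e_{i_1}\otimes f_1)(e_{i_2}\otimes f_2)\dotsb(e_{i_m}\otimes f_m),
\]
indexed by tuples $(i_1,\dotsc,i_m)\in\{1,\dotsc,n\}^m$. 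The map $v_{(i_1,\dotsc,i_m)}\mapsto e_{i_1}\otimes\dotsb\otimes e_{i_m}$ is a $\gl n$-equivariant linear isomorphism onto $\otimes^m\CC^n$.

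It remains to check that the $S_m$-action on the all-ones weight space (inherited from the embedding $S_m\subset\gl m$ by permutation matrices) corresponds to the action of $S_m$ on $\otimes^m\CC^n$ by permuting tensor factors. This is a direct computation on basis vectors: for $\sigma\in S_m$ one has $\sigma\cdot f_j = f_{\sigma(j)}$, whence $\sigma\cdot v_{(i_1,\dotsc,i_m)} = v_{(i_{\sigma^{-1}(1)},\dotsc,i_{\sigma^{-1}(m)})}$, matching the tensor-factor permutation action on $\otimes^m\CC^n$ (up to the usual convention). Combining the three steps gives the asserted decomposition.

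The main obstacle, such as it is, is the bookkeeping in this last step --- keeping the $T_m$ weight picking out ``each $f_j$ appears exactly once'' correctly aligned with the tensor-factor indexing, and confirming that the $S_m$ action via permutation matrices in $\gl m$ translates to the standard place-permutation action. Once that identification is made, the theorem follows immediately from Theorem~\ref{theorem:rsk} and the already-established isomorphism $R_m(W^m_\lambda)\cong V_\lambda$.
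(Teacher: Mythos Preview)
Your proposal is correct and follows essentially the same route as the paper: apply $R_m\otimes\mathrm{id}$ to Theorem~\ref{theorem:rsk} with $d=m$, and identify the $(1^m)$-weight space of $\Sym^m(\CC^m\otimes\CC^n)$ with $\otimes^m\CC^n$ via the explicit basis of monomials in which each $f_j$ appears exactly once. The paper phrases this basis in terms of matrices $A$ with each of the $m$ rows summing to $1$, but this is just a relabelling of your tuples $(i_1,\dotsc,i_m)$.
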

\begin{proof}
  Apply the functor $R_m\otimes \mathrm{id}$ to $(\gl m, \gl n)$ duality (Theorem~\ref{theorem:rsk}).
  The $(1^m)$-weight space of $\Sym^m(\CC^m\otimes \CC^n)$ is spanned by vectors of the form $v_A$, where each of the $m$ rows of $A$ sums to $1$.
  If the $1$ in the $i$th row occurs in the $j_i$th column, then map $v_A$ to the vector $e_{j_1}\otimes \dotsb \otimes e_{j_m}\in \otimes^m\CC^n$.
  This induces an isomorphism $(R_m\otimes \mathrm{id})(\CC^m\otimes \CC^n)\tilde\to \otimes^m \CC^n$ of $S_m\times \gl n$-representations.
\end{proof}
\begin{remark}
  Schur-Weyl duality was used by Schur~\cite{schur} to give a second proof of the classification of irreducible polynomial representations of $\gl n$.
  It was popularized by Hermann Weyl \cite{Weyl} and is known as Schur-Weyl duality.
  See also \cite{MR2349209} and \cite[Section~6.4]{rtcv}.
\end{remark}
\begin{theorem}
  [Gelfand models for $S_n$]
  \label{theorem:gelfand-sn}
  Let $M_{n,k}$ denote the set of all elements $w\in S_n$ such that $w=w^{-1}$ and $w$ has $k$ fixed points.
  Define a representation of $S_n$ on $\CC[M_{n,k}]$ (the space of complex-valued functions on $M_{n,k}$) by:
  \begin{displaymath}
    \rho_1(g)f(w) = (-1)^{i_1(g,w)}f(g^{-1}wg)
  \end{displaymath}
  where $i_1(g,w)$ is the number of indices $i<j$ such that $w(i)=j$ and $g(i)>g(j)$.
  Then
  \begin{displaymath}
    \CC[M_{n,k}] = \bigoplus_{\{\lambda\vdash n\mid \text{ with $k$ odd columns\}}} V_\lambda.
  \end{displaymath}
  In particular, if $M_n=\coprod M_{n,k}$ is the set of all elements $w\in S_n$ such that $w^2=\mathrm{id}$, then $\CC[M_n]$ is a Gelfand model for $S_n$.
\end{theorem}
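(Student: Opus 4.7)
The plan is to imitate the derivation of Schur-Weyl duality, applying the all-ones weight space functor $R_n$ of Section~\ref{sec:repr-symm-groups} to Theorem~\ref{theorem:schuetzenberger-refined-model} with $k+2l=n$. Since $R_n(W^n_\lambda)\cong V_\lambda$, the right-hand side immediately becomes $\bigoplus_{\lambda\vdash n,\,k\text{ odd columns}} V_\lambda$, and it remains to identify the left-hand side with $(\rho_1,\CC[M_{n,k}])$ as $S_n$-modules.

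I would reuse the basis $\{v_A\}$ of $\Sym^k(\CC^n)\otimes\Sym^l(\wedge^2\CC^n)$ from the proof of Theorem~\ref{theorem:rsk-symmetry}, where $A$ runs over symmetric matrices with non-negative integer entries, trace $k$, and off-diagonal sum $2l$. Restriction to weight $(1,\dotsc,1)$ forces each row sum to equal $1$, so $A$ is a $\{0,1\}$-matrix --- precisely the permutation matrix of an involution $w_A\in M_{n,k}$ --- producing a basis of $R_n(\Sym^k(\CC^n)\otimes\Sym^l(\wedge^2\CC^n))$ indexed by $M_{n,k}$. Computing the inherited $S_n$-action as in the Schur-Weyl proof, one finds $\sigma\cdot v_A=(-1)^{i_1(\sigma,w_A)}\,v_{\sigma A\sigma^{-1}}$; the sign records those wedge factors $e_{\sigma(i)}\wedge e_{\sigma(j)}$ with $\sigma(i)>\sigma(j)$, which are indexed by exactly the $2$-cycles of $w_A$ counted by $i_1(\sigma,w_A)$.

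To conclude, I would compare characters with $(\rho_1,\CC[M_{n,k}])$. The trace of the wedge action at $\sigma$ picks up only conjugation-fixed $w$, yielding
\begin{displaymath}
  \sum_{w\in M_{n,k},\ \sigma w=w\sigma}(-1)^{i_1(\sigma,w)};
\end{displaymath}
unwinding $(\rho_1(\sigma)\delta_{w_0})(w)=(-1)^{i_1(\sigma,w)}\delta_{w_0}(\sigma^{-1}w\sigma)$ shows the diagonal entry of $\rho_1(\sigma)$ on $\delta_w$ equals $(-1)^{i_1(\sigma,w)}$ when $\sigma w=w\sigma$ and zero otherwise, producing the same sum. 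Hence the two $S_n$-modules agree, giving the decomposition of $\CC[M_{n,k}]$; summing over $k$ and using that each partition of $n$ has a unique number of odd columns yields the Gelfand model statement. The main subtlety is that the evident set-theoretic bijection $v_A\leftrightarrow\delta_{w_A}$ is \emph{not} itself $S_n$-equivariant --- the sign $i_1(\sigma,w)$ appears on ``the other side'' in $\rho_1$ --- so I would not attempt an explicit intertwiner and instead invoke characters.
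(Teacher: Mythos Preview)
Your approach is exactly the one the paper takes: its entire proof is the single sentence ``Apply the functor $R_n$ to both sides of the first identity in Theorem~\ref{theorem:schuetzenberger-refined-model}.'' You have simply (and correctly) fleshed out the identification $R_n\bigl(\Sym^k(\CC^n)\otimes\Sym^l(\wedge^2\CC^n)\bigr)\cong(\rho_1,\CC[M_{n,k}])$ that the paper leaves implicit, including the observation that the obvious bijection is not equivariant but the characters agree.
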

\begin{proof}
  Apply the functor $R_n$ to both sides of the first identity in Theorem~\ref{theorem:schuetzenberger-refined-model}.
\end{proof}
\begin{remark}
  The above Gelfand model for symmetric groups is well-known.
  For an alternative approach see \cite{Inglis1990,kodiyalam-verma}.
  The decomposition in Theorem~\ref{theorem:burge} also gives a Gelfand model for $S_n$; the model obtained is the twist of the model in Theorem~\ref{theorem:gelfand-sn} by the sign character.
  Note that the twist of $V_\lambda$ by the sign character is $V_{\lambda'}$.
\end{remark}
\begin{remark}
  [Relation to induced representations]
  The representation $\rho_1$ of $S_n$ on $\CC[M_{n,k}]$ in Theorem~\ref{theorem:gelfand-sn} can be viewed as an induced representation.
  For each positive integer $l$, let $B_{2l}\subset S_{2l}$ denote the centralizer of the involution $(1,2)(3,4)\dotsb(2l-1,2l)\in S_{2l}$.
  Let $e_{2l}$ denote the restriction of the sign character of $S_{2l}$ to $B_{2l}$.
  If $n=2l+k$, then $B_{2l}\times S_k$ is a subgroup of $S_n$.
  We have $\rho_1 = \Ind_{B_{2l}\times S_k}^{S_n}e_{2l}\otimes 1_{S_k}$, where $1_{S_k}$ denotes the trivial character of $S_k$.
  Taking $k=0$ gives:
  \begin{displaymath}
    \Ind_{B_{2l}}^{S_{2l}} e_{2l} = \bigoplus_{\{\lambda \vdash 2l\mid \text{ every part appearing even number of times}\}} V_\lambda.
  \end{displaymath}
  Similarly, applying the functor $R_{2l}$ to both sides of Theorem~\ref{theorem:symsym} gives:
  \begin{displaymath}
    \Ind_{B_{2l}}^{S_{2l}} 1_{B_{2l}} = \bigoplus_{\{\lambda\vdash 2l\mid \text{ with all parts even}\}} V_\lambda.
  \end{displaymath}
\end{remark}
Applying the functor $R_n$ to the first identity in Theorem~\ref{theorem:burge2} gives an interesting multiplicity-free representation of $S_n$:
\begin{theorem}
  \label{theorem:Sn-burge2}
  Let $n$ be an even integer, say $n=2m$.
  Given pairs $(i,j)$ and $(k,l)$, let $(i',j')$ and $(k',l')$ denote their sorted versions, so that $i'<j'$ and $k'<l'$.
  Write $(i,j)<(k,l)$ if $(i',j')<(k',l')$ in lexicographic order.
  Define a representation of $S_n$ on $\CC[M_{n,0}]$ by:
  \begin{displaymath}
    \rho_2(g)f(w) = (-1)^{i_2(g,w)}f(g^{-1}wg),
  \end{displaymath}
  where
  \begin{multline*}
    i_2(g,w)=i_1(g,w)+\#\{(i,j)<(k,l)\mid i<j, k<l, w(i)=j, w(k)=l,\\(g(i),g(j))>(g(k),g(l))\}.
  \end{multline*}
  Then
  \begin{displaymath}
    \CC[M_{n,0}] = \bigoplus_{\lambda\in \TP(n)} V_\lambda.
  \end{displaymath}
\end{theorem}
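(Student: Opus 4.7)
The plan is to apply the functor $R_n$ (with $n=2m$) to the first identity in Theorem~\ref{theorem:burge2}, namely $\wedge^m(\wedge^2\CC^n)\cong \bigoplus_{\lambda\in \TP(n)} W^n_\lambda$, where the outer exterior power is indexed so that the degree of the $\gl n$-representation equals $n$. Since this is a polynomial representation of $\gl n$ of degree $2m=n$, the functor $R_n$ applies; combining with the identification $R_n(W^n_\lambda)\cong V_\lambda$ from the previous section yields $R_n(\wedge^m(\wedge^2\CC^n))\cong \bigoplus_{\lambda\in \TP(n)} V_\lambda$ as $S_n$-representations.

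It then suffices to identify $R_n(\wedge^m(\wedge^2\CC^n))$ with $\CC[M_{n,0}]$ equipped with the prescribed action $\rho_2$. A natural basis of $\wedge^m(\wedge^2\CC^n)$ consists of vectors $v_w=(e_{i_1}\wedge e_{j_1})\wedge\cdots\wedge (e_{i_m}\wedge e_{j_m})$, one for each simple graph on $\{1,\ldots,n\}$ with $m$ edges, where the edges $(i_k,j_k)$ with $i_k<j_k$ are listed in lexicographic order. The weight of $v_w$ is the degree sequence of the underlying graph, so $v_w$ lies in the $(1,\ldots,1)$-weight space precisely when the graph is a perfect matching, i.e., when $w\in M_{n,0}$. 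Hence $\delta_w\mapsto v_w$ is a linear bijection $\CC[M_{n,0}]\tilde\to R_n(\wedge^m(\wedge^2\CC^n))$.

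Next, I would compute the action of $g\in S_n\subset \gl n$, viewed as a permutation matrix, on this basis. We have $g\cdot v_w=(e_{g(i_1)}\wedge e_{g(j_1)})\wedge\cdots\wedge (e_{g(i_m)}\wedge e_{g(j_m)})$, and must rewrite it as $\pm v_{gwg^{-1}}$. Two sign-producing operations intervene. First, within each inner wedge, if $g(i_k)>g(j_k)$ one swaps the factors to produce a sorted pair; summed over $k$ this contributes $(-1)^{i_1(g,w)}$. Second, the resulting $m$ sorted pairs---each an element of $\wedge^2\CC^n$---must be reordered lexicographically inside the outer exterior product, and each adjacent transposition of two pairs flips sign by outer antisymmetry. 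The number of such transpositions equals the count of lex-inversions between the sorted pairs, which is exactly the second summand in the definition of $i_2(g,w)$. Combining, $g\cdot v_w=(-1)^{i_2(g,w)}v_{gwg^{-1}}$, matching $\rho_2$ under $\delta_w\leftrightarrow v_w$.

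The main obstacle is the careful sign bookkeeping in the second step. One must use that each swap of two adjacent elements of $\wedge^2\CC^n$ inside the outer wedge contributes exactly one sign change---rather than mistakenly flattening to $\wedge^{2m}\CC^n$, where a block transposition of two adjacent pairs would be decomposed as four single transpositions and give a different parity---and verify that the resulting inversion count among the sorted pairs agrees with the combinatorial quantity $i_2(g,w)-i_1(g,w)$ appearing in the theorem, noting that the comparison of pairs in the statement uses sorted versions throughout, as fixed in the paragraph preceding the theorem.
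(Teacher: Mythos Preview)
Your approach is exactly the one the paper indicates: its entire proof is the sentence immediately preceding the theorem, ``Applying the functor $R_n$ to the first identity in Theorem~\ref{theorem:burge2},'' and you carry this out, including the identification of the $(1,\dotsc,1)$-weight space of $\wedge^m(\wedge^2\CC^n)$ with perfect matchings and the two-step sign analysis (inner sorts giving $i_1$, outer reordering of pairs giving the second summand of $i_2$). Your write-up in fact supplies the sign bookkeeping that the paper leaves implicit.
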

\begin{remark}
  [Relation to induced representations]
  The representation $\rho_2$ of $S_{2m}$ on $M_{2m,0}$ in Theorem~\ref{theorem:Sn-burge2} can be viewed as an induced representation.
  The group $B_{2m}$ is a semidirect product $(\ZZ/2\ZZ)^m\rtimes S_m$.
  The $\eta_m$ be the character of $B_{2m}$ whose restriction to $S_m$ is the sign character of $S_m$ and whose restriction to $(\ZZ/2\ZZ)^m$ is the product of the non-trivial characters of each of the $m$ factors.
  Then Theorem~\ref{theorem:Sn-burge2} can be restated as:
  \begin{displaymath}
    \Ind_{B_{2m}}^{S_{2m}}\eta_m = \bigoplus_{\lambda\in \TP(n)} V_\lambda.
  \end{displaymath}
  For a different approach to this result see \cite[Exercise~45.6]{bump}
\end{remark}
When $R_n$ is applied to the second identity in Theorem~\ref{theorem:burge2}, the representations obtained are twists by the sign character of the representations in Theorem~\ref{theorem:Sn-burge2}.

\end{document}